\documentclass{article}
\usepackage{cite}

\usepackage{makeidx}
\makeindex
\usepackage[colorlinks,citecolor=blue,linkcolor=blue,urlcolor=blue,filecolor=blue,breaklinks]{hyperref}
\usepackage{pdfpages}

\usepackage[utf8]{inputenc}
\usepackage{amssymb}
\usepackage{amsfonts}
\usepackage{amsmath}
\usepackage{amsthm}
\usepackage{mathtools}
\usepackage{mathrsfs}
\usepackage{cleveref}
\usepackage{relsize}
\usepackage{tikz-cd}
\usepackage{geometry}
\geometry{a4paper}
\usepackage{graphicx}
\usepackage{booktabs}
\usepackage{array}
\usepackage{paralist}
\usepackage{verbatim}
\usepackage{subfig}
\usepackage{epsfig}
\usepackage{changepage}

\usepackage{fancyhdr}
\pagestyle{fancy}

\lhead{}\chead{}\rhead{}
\lfoot{}\cfoot{\thepage}\rfoot{}

\usepackage{sectsty}
\allsectionsfont{\sffamily\mdseries\upshape}
\usepackage[nottoc,notlof,notlot]{tocbibind}
\usepackage[titles,subfigure]{tocloft}

\definecolor{lightblue}{rgb}{0.8,0.8,1}

\setlength{\cftbeforepartskip}{0ex}
\setlength{\cftbeforesecskip}{0ex}
\setcounter{tocdepth}{1}

\theoremstyle{plain}
\newtheorem{theorem}{Theorem}

\newtheorem{proposition}[theorem]{Proposition}

\theoremstyle{definition}
\newtheorem{definition}[theorem]{Definition}
\newtheorem{remark}[theorem]{Remark}

\newtheoremstyle{named}%
    {}{}{\itshape}{}{\bfseries}{.}{.5em}{\thmnote{#3}}
\theoremstyle{named}

\newcommand{\R}{\mathbb{R}}
\newcommand{\Z}{\mathbb{Z}}

\newcommand{\id}{\mathrm{Id}}

\renewenvironment{proof}[1][Proof]{\noindent\textbf{#1.} }{\ \rule{0.5em}{0.5em}\par\addvspace{\baselineskip}}

\newcommand{\Heis}{\mathcal{H}}

\renewcommand{\geq}{\geqslant}
\renewcommand{\leq}{\leqslant}

\definecolor{cycleBM}{RGB}{0,150,0}
\definecolor{cycleD}{RGB}{255,0,0}

\begin{document}
\title{Heisenberg homology of ribbon graphs}
\author{Christian Blanchet\footnote{Universit{\'e} Paris Cité and Sorbonne Universit{\'e}, CNRS, IMJ-PRG, F-75006 Paris, France}}

\maketitle
\begin{abstract} 
We review Heisenberg homology of configurations in once bounded surfaces and extend the construction to the regular thickening of a finite graph with ribbon structure.
\end{abstract}

\noindent 
\textbf{2020 MSC}: 57K20, 55R80, 55N25, 20C12\\
\textbf{Key words}: Mapping class group, monodromy,  configuration spaces, ribbon graphs, Heisenberg homology.

\section*{Introduction}
We defined and studied  in \cite{HeisenbergHomology} the homology of the configuration space of unordered  points in an oriented surface with one boundary component, over local coefficients defined from a representation of the Heisenberg group. It carries a twisted action of the Mapping Class Group, which can be untwisted in good cases. Here we review the construction and extend it to oriented compact surfaces with positive number of boundary components. We then consider the surface associated to a graph with ribbon structure and show that the Heisenberg homology can be extracted from the graph. 
T\^ete \`a t\^ete graphs and twists were introduced by Norbert A'Campo as a combinatorial tool for describing
 mapping classes on surfaces with boundary arising as monodromy of curve singularities. A careful exposition including  a version based on relative ribbon graphs can be found in \cite{AcampoetcAIF}, see also  \cite{Graf}. We expect further computations for the action of t\^ete à t\^ete twists on Heisenberg homologies, with applications to curve singularities. We thank the anonymous referees for careful reading.

\section{Heisenberg homology of surface configurations}\label{HeisSection}
Let $\Sigma_{g,m}$, $g\geq0$, $m\geq 1$, be an oriented surface of  genus $g$ with $m$ boundary components. For $n\geq 2$, the unordered configuration space\index{configuration space} of $n$ points in $\Sigma_{g,m}$ is
\[
\mathcal{C}_{n}(\Sigma_{g,m} )= \{ \{c_{1},\dots,c_{n}\} \subset \Sigma_{g,m} \mid c_i\neq c_j \text{ for $i\neq j$}\}.
\]
 The surface braid group\index{surface braid group}  is then defined as $\mathbb{B}_{n}(\Sigma_{g,m})=\pi_{1}(\mathcal{C}_{n}(\Sigma_{g,m}),*)$. 
A presentation for this group was first obtained by G. P. Scott \cite{Scott} and revisited by Gonz\'ales-Meneses \cite{Gonzalez}, Bellingeri \cite {Bellingeri}.
We fix based loops, $\alpha_1,\dots,\alpha_g,\beta_1,\dots,\beta_g,\gamma_1,\dots,\gamma_{m-1}$, as depicted in Figure \ref{modelSurf}.
The base point $*_1$ belongs to the base configuration $*$. We will use the same notation $\alpha_r$, $\beta_s$, $\gamma_t$, for the corresponding braids where only the first point is moving.
\begin{figure}
\centering
\includegraphics[scale=0.6]{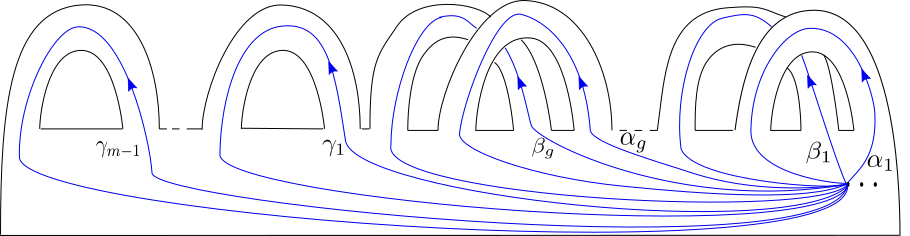}
\caption{Model for $\Sigma_{g,m}$.}
\label{modelSurf}
\end{figure}
The braid group $\mathbb{B}_{n}(\Sigma_{g,m})$ has generators $\alpha_1,\ldots,\alpha_g$, $\beta_1,\ldots,\beta_g$, $\gamma_1,\ldots,\gamma_{m-1}$ together with the classical generators $\sigma_1,\ldots,\sigma_{n-1}$,  and relations:
\begin{equation}
\label{eq:relations}
\begin{cases}
\,\text{(\textbf{BR1}) }\, [\sigma_{i},\sigma_{j}] = 1 & \text{for } \lvert i-j \rvert \geq 2, \\
\,\text{(\textbf{BR2}) }\, \sigma_{i}\sigma_{j}\sigma_{i}=\sigma_{j}\sigma_{i}\sigma_{j} & \text{for } \lvert i-j \rvert = 1, \\
\,\text{(\textbf{CR1}) }\, [\zeta,\sigma_{i}] = 1 & \text{for } i>1 \text{ and all } \zeta\text{ among the }\alpha_r,\beta_s,\gamma_t, \\
\,\text{(\textbf{CR2}) }\, [\zeta,\sigma_{1}\zeta\sigma_{1}] = 1 & \text{for all } \zeta\text{ among the }\alpha_r,\beta_s,\gamma_t, \\
\,\text{(\textbf{CR3}) }\, [\zeta,\sigma^{-1}_{1}\eta\sigma_{1}] = 1& \text{for all } \zeta\neq \eta \text{ among the }\alpha_r,\beta_s,\gamma_t, \text{ with}\\
& \{\zeta,\eta\}\neq \{\alpha_r,\beta_r\},  \text{\color{blue} $\zeta$ on the left of $\eta$},\\
\,\text{(\textbf{SCR}) }\, \sigma_{1}\beta_{r}\sigma_{1}\alpha_{r}\sigma_{1}=\alpha_{r}\sigma_{1}\beta_{r} & \text{for all } r.
\end{cases}
\end{equation}
Composition of loops is written from right to left. Relation (\textbf{CR3}) is equivalent to the formulation in  \cite{BellingeriGodelle}, by using (\textbf{CR2}).

  We will use the notation $x.y$ for the standard intersection form on $H_1(\Sigma_{g,m},\Z)$. The Heisenberg group\index{Heisenberg group}  $\mathcal{H}(\Sigma_{g,m})$ is the central extension of the homology group $H_1(\Sigma_{g,m},\Z)$ defined with the $2$-cocycle $(x,y)\mapsto x.y$. As a set 
 $\mathcal{H}(\Sigma_{g,m})$ is equal to
  $\Z \times H_1(\Sigma_{g,m},\Z)$, and  the operation is given by
\begin{equation}
\label{eq:Heisenberg-product}
(k,x)(l,y)=(k+l+\,x.y,x+y).
\end{equation}
 
 We will use the notation $a_r$, $b_s$, $c_t$ for the homology classes of $\alpha_r$, $\beta_s$,
 $\gamma_t$.
\begin{proposition}
\label{hom_phi}
 For each $g\geq 0$, $m\geq 1$ and $n\geq  2$, the quotient of the braid group $\mathbb{B}_n(\Sigma_{g,m})$ by the subgroup  $[\sigma_1,\mathbb{B}_n(\Sigma_{g,m})]^N$ normally generated by the commutators $[\sigma_1,x]$, $x\in \mathbb{B}_n(\Sigma_{g,m})$, is isomorphic 
to the Heisenberg group $\Heis(\Sigma_{g,m})$. An isomorphism $$\mathbb{B}_n(\Sigma_{g,m})/[\sigma_1,\mathbb{B}_n(\Sigma_{g,m})]^N\approx \mathcal{H}(\Sigma_{g,m}) $$ is  represented by the surjective homomorphism
\[
\phi \colon \mathbb{B}_{n}(\Sigma_{g,m}) \relbar\joinrel\rightarrow \Heis(\Sigma_{g,m})
\]
sending each $\sigma_i$ to $u=(1,0)$, $\alpha_r$ to $\tilde{a}_r=(0,a_r)$, $\beta_s$ to $\tilde{b}_s=(0,b_s)$, $\gamma_t$ to $\tilde{c}_t=(0,c_t)$. \\
\end{proposition}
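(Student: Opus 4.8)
The plan is to exhibit $\phi$ as a homomorphism, descend it to the stated quotient, and then identify that quotient with $\Heis(\Sigma_{g,m})$ by a normal-form argument. First I would check that $\phi$ respects the relations \eqref{eq:relations}. Since $u=(1,0)$ is central in $\Heis(\Sigma_{g,m})$, relations (\textbf{BR1}), (\textbf{BR2}), (\textbf{CR1}), (\textbf{CR2}) map to trivial identities; (\textbf{CR3}) maps to $[\tilde\zeta,\tilde\eta]=1$, which holds because such $\zeta,\eta$ have vanishing intersection; and (\textbf{SCR}) maps to $u^{3}\tilde b_r\tilde a_r=u\,\tilde a_r\tilde b_r$, i.e.\ $[\tilde a_r,\tilde b_r]=u^{2}$, which is exactly $u^{2\,a_r.b_r}$ since $a_r.b_r=1$. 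Hence $\phi$ is a well-defined homomorphism, and it is surjective because the $\tilde a_r,\tilde b_s,\tilde c_t$ generate the homology summand while $u$ generates the center. As $u$ is central, $\phi([\sigma_1,x])=[u,\phi(x)]=1$ for every $x$, so $[\sigma_1,\mathbb{B}_n(\Sigma_{g,m})]^N\subseteq\ker\phi$ and $\phi$ descends to a surjection $\bar\phi\colon G\to\Heis(\Sigma_{g,m})$, where $G:=\mathbb{B}_n(\Sigma_{g,m})/[\sigma_1,\mathbb{B}_n(\Sigma_{g,m})]^N$. It remains to prove $\bar\phi$ is injective.

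The core step is to simplify the presentation of $G$. In $G$ the element $\sigma_1$ is central by construction. Feeding this into (\textbf{BR2}) with $i=1,j=2$ gives $\sigma_1^{2}\sigma_2=\sigma_1\sigma_2^{2}$, hence $\sigma_1=\sigma_2$; iterating shows $\sigma_1=\cdots=\sigma_{n-1}=:\sigma$, a single central generator. With all $\sigma_i=\sigma$ central, relations (\textbf{CR1}) and (\textbf{CR2}) become vacuous, (\textbf{CR3}) collapses to $[\zeta,\eta]=1$ for all distinct generators among the $\alpha_r,\beta_s,\gamma_t$ with $\{\zeta,\eta\}\neq\{\alpha_r,\beta_r\}$, and (\textbf{SCR}) collapses to $[\alpha_r,\beta_r]=\sigma^{2}$. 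Thus $G$ is generated by $\sigma,\alpha_1,\dots,\alpha_g,\beta_1,\dots,\beta_g,\gamma_1,\dots,\gamma_{m-1}$ subject only to: $\sigma$ central, all distinct pairs of the remaining generators commute except $[\alpha_r,\beta_r]=\sigma^{2}$. I expect this presentation bookkeeping --- verifying that each relation reduces exactly as claimed and that nothing collapses further --- to be the main obstacle.

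Finally I would establish a normal form. Using that $\sigma$ is central and that any two of the remaining generators either commute or satisfy $\beta_r\alpha_r=\sigma^{-2}\alpha_r\beta_r$, every element of $G$ can be rewritten as
\[
\sigma^{k}\prod_{r}\alpha_r^{p_r}\prod_{r}\beta_r^{q_r}\prod_{t}\gamma_t^{s_t},
\]
each swap of an $\alpha_r$ past the matching $\beta_r$ only producing a central power of $\sigma$ that is collected in front. Applying $\bar\phi$ to such an expression yields an element of $\Heis(\Sigma_{g,m})$ whose $H_1$-component is $\sum_r p_r a_r+\sum_r q_r b_r+\sum_t s_t c_t$; since $a_r,b_s,c_t$ form a basis of $H_1(\Sigma_{g,m},\Z)$, this determines the exponents $p_r,q_r,s_t$, and the central $\Z$-component then determines $k$. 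Hence the map sending a tuple $(k,(p_r),(q_r),(s_t))$ to the image of the corresponding normal-form word is injective; as every element of $G$ is realized by such a word, $\bar\phi$ is injective. Together with surjectivity this gives the desired isomorphism $G\approx\Heis(\Sigma_{g,m})$.
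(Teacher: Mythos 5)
Your proposal is correct and follows essentially the same route as the paper: check the relations to get a well-defined surjection $\phi$, observe that centrality of $u$ kills the normal subgroup, and then reduce the presentation of the quotient (all $\sigma_i$ collapsing to one central generator, (\textbf{CR3}) becoming commutativity, (\textbf{SCR}) becoming $[\alpha_r,\beta_r]=\sigma^2$) to a presentation of $\Heis(\Sigma_{g,m})$. Your closing normal-form argument simply makes explicit the step the paper states as ``this matches a presentation of $\Heis(\Sigma_{g,m})$,'' and it is a valid way to do so.
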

This is proved in \cite{HeisenbergHomology} for a surface with one boundary component. 

\begin{proof}
We first show that the homomorphism $\phi$ is defined and surjective. 
For proving the existence of $\phi$ it is enough to show that all relations  are satisfied in $\mathcal{H}(\Sigma_{g,m})$ for the images of the generators. This is straightforward for relations  (BR$\ast$) and (CR$\ast$). For (SCR), we get
$$\phi(\sigma_1\beta_r\sigma_1\alpha_r\sigma_1)=(2,a_r+b_r)=\phi(\alpha_r\sigma_1\beta_r).$$
From the formulas, we see that 
$\phi$ is surjective.

We now use the presentation for proving that the quotient map is an isomorphism.
From \eqref{eq:relations} we obtain that the quotient of $\mathbb{B}(\Sigma_{g,m})$ by 
the normal subgroup $[\sigma_1,\mathbb{B}_n(\Sigma_{g,m})]^N$ is generated by $\sigma_1$, $\alpha_1,\dots,\alpha_g,\beta_1,\dots,\beta_g$,  $\gamma_1,\dots,\gamma_{m-1}$,
with relations
\[
\begin{cases}
\sigma_1 \text{ is central},\\
\,\text{(\textbf{CR3}) }\, \zeta\eta = \eta\zeta & \text{for all } \zeta,\eta \text{ among the }\alpha_r,\beta_s,\gamma_t, \text{ with}\\
& \{\zeta,\eta\}\neq \{\alpha_r,\beta_r\},\\
\,\text{(\textbf{SCR}) }\, \sigma_{1}^2\beta_{r}\alpha_{r}=\alpha_{r}\beta_{r} & \text{for all } r.
\end{cases}
\]
This matches a presentation of $\Heis(\Sigma_{g,m})$.
\end{proof}
Using the homomorphism $\phi$ we define a regular covering $\widetilde{\mathcal{C}}_n(\Sigma_{g,m})$ of the configuration space ${\mathcal{C}}_n(\Sigma_{g,m})$. 
The homology of this cover  is what we call 
 the {\em Heisenberg homology}.
Let us denote by $S_*(\widetilde{\mathcal{C}}_n(\Sigma_g))$ the singular  chain complex of the Heisenberg cover  $\widetilde{\mathcal{C}}_n(\Sigma_{g,m})$ which is a right $\Z[\Heis(\Sigma_{g,m})]$-module.
  Given a representation $\rho: \Heis(\Sigma_{g,m}) \rightarrow GL(W)$, the corresponding local homology is that of the complex
   \begin{equation} \label{eq:Local}
   S_*(\mathcal{C}_n(\Sigma_{g,m});W):=S_*(\widetilde{\mathcal{C}}_n(\Sigma_{g,m}))\otimes_{\Z[\Heis(\Sigma_{g,m})]} W
   \end{equation}
It will be called the Heisenberg homology of surface configurations with coefficients in $W$.

It is convenient  to also consider  Borel-Moore homology\index{Borel-Moore homology}
\begin{equation}\label{eq:BorelMoore}
H_*^{BM}(\mathcal{C}_{n}(\Sigma_{g,m});W) = 
{\varprojlim_T}\, H_*(\mathcal{C}_{n}(\Sigma_{g,m}), \mathcal{C}_{n}(\Sigma_{g,m})\setminus T ; W),
\end{equation}
the inverse limit is taken over all compact subsets  $T\subset\mathcal{C}_{n}(\Sigma_{g,m})$. The relative homology with twisted coefficient $H_*(\mathcal{C}_{n}(\Sigma_{g,m}), \mathcal{C}_{n}(\Sigma_{g,m})\setminus T ; W)$
is obtained from the quotient of the complex $S_*(\mathcal{C}_n(\Sigma_{g,m});W)$ by the subcomplex 
$ S_*(\widetilde{\mathcal{C}}_n(\Sigma_{g,m})\setminus \widetilde T)\otimes_{\Z[\Heis(\Sigma_{g,m})] }W$, where $\widetilde T$ is the inverse image of $T$ in the covering space $\widetilde{\mathcal{C}}_n(\Sigma_{g,m})$.

\section{Ribbon graphs}
A ribbon structure\index{ribbon structure} on a finite graph  is  a cyclic ordering of the adjacent edges at each vertex. Given a graph with ribbon structure $\Gamma$, there is a regular thickening $\Sigma_\Gamma$ which is a compact surface containing $\Gamma$ as a strong deformation retract.
This surface $\Sigma_\Gamma$ is usually called a ribbon graph\index{ribbon graph} or fat graph.
For $n\geq 2$, we may consider the unordered configuration space $\mathcal{C}_{n}(\Gamma)\subset \mathcal{C}_{n}(\Sigma_\Gamma)$ and the graph braid group $\mathbb{B}_{n}(\Gamma)=\pi_1(\mathcal{C}_{n}(\Gamma),*)$.
 Graph braid groups and homology of graph configurations are intensively studied \cite{Abrams,CrispWiest,FarleySabalka,Kurlin,Chettih,MaciazekSawicki,Anetc,AnMaciazek}. It will be good to connect computation for surface configurations and for graph configurations.
 Following a so called {\em compression trick} \cite{Bigelow2001,Martel,HeisenbergHomology} we can prove the following.
 
 \begin{theorem}\label{ThmIso}
 The inclusion $\mathcal{C}_{n}(\Gamma)\subset \mathcal{C}_{n}(\Sigma_\Gamma)$ induces an isomorphism on Borel-Moore homologies with local coefficients defined by a representation $\rho: \Heis(\Sigma_\Gamma)\rightarrow GL(W)$,
\begin{equation}\label{eq:ThmIso} 
 H_*^{BM}(\mathcal{C}_{n}(\Gamma);W)\cong H_*^{BM}(\mathcal{C}_{n}(\Sigma_\Gamma);W)\ .
 \end{equation}
 \end{theorem}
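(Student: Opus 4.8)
The plan is to exhibit the inclusion as an isomorphism through a fibered \emph{compression} of the surface configuration space onto the graph, carried out at the level of Borel--Moore homology with the given Heisenberg local system. First I would fix the regular-neighborhood structure of $\Sigma_\Gamma$: a band $e\times[-1,1]$ over each edge $e$ (with core $e\times\{0\}\subset\Gamma$) and a disk over each vertex, together with the retraction $\pi\colon\Sigma_\Gamma\to\Gamma$ collapsing the transverse $[-1,1]$-coordinate. Since the representation $\rho$ factors through $H_1(\Sigma_\Gamma)$ and the inclusion $\Gamma\hookrightarrow\Sigma_\Gamma$ induces an isomorphism $H_1(\Gamma)\cong H_1(\Sigma_\Gamma)$, the coefficient system on $\mathcal{C}_n(\Gamma)$ is exactly the restriction along $\mathbb{B}_n(\Gamma)\to\mathbb{B}_n(\Sigma_\Gamma)$ of the one classified by the homomorphism $\phi$ of Proposition~\ref{hom_phi}; in particular any deformation supported in the transverse fibers of $\pi$ respects the $W$-coefficients.

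Next I would set up the compression itself. Pushing each configuration point's transverse coordinate monotonically to $0$ sends configurations toward $\mathcal{C}_n(\Gamma)$ but collides points that share a $\pi$-fiber. The compression trick resolves this by simultaneously spreading such points along the edge direction, using the linear order along each edge together with the cyclic (ribbon) ordering of the edges at each vertex to route them onto distinct positions of $\Gamma$; configurations that cannot be separated in this way are swept monotonically into the fat diagonal, i.e.\ toward the ends of the open $2n$-manifold $\mathcal{C}_n(\Sigma_\Gamma)$. The output is a proper homotopy, and its properness is what guarantees that it acts on the inverse limit \eqref{eq:BorelMoore} defining Borel--Moore homology.

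With the deformation in hand, I would deduce the isomorphism from the vanishing of the relative group $H_*^{BM}(\mathcal{C}_n(\Sigma_\Gamma),\mathcal{C}_n(\Gamma);W)$. The compression supplies this nullity: in the local-coefficient complex every configuration off $\mathcal{C}_n(\Gamma)$ is carried either onto $\mathcal{C}_n(\Gamma)$ or into the fat diagonal at the ends, so the relative classes die in the inverse limit. Poincar\'e--Lefschetz duality on the open oriented $2n$-manifold $\mathcal{C}_n(\Sigma_\Gamma)$ is the natural bookkeeping device for matching degrees and for checking that the Heisenberg system concentrates the homology in the range where the $n$-dimensional complex $\mathcal{C}_n(\Gamma)$ lives; this yields the isomorphism \eqref{eq:ThmIso}.

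The main obstacle is precisely the collision analysis, and it is genuinely a Borel--Moore-with-local-coefficients phenomenon rather than a homotopy equivalence: already for $\Gamma=S^1$, so that $\Sigma_\Gamma$ is an annulus, the spaces $\mathcal{C}_n(\Gamma)$ and $\mathcal{C}_n(\Sigma_\Gamma)$ have different homotopy types when $n\geq2$, so the extra topology of the surface configuration space must be invisible to the functor being computed. The delicate point is therefore to make the spreading-and-routing flow globally continuous and proper while verifying that \emph{every} configuration not coming from $\Gamma$ is driven to the ends of $\mathcal{C}_n(\Sigma_\Gamma)$; it is here that the ribbon (cyclic-order) data is indispensable, since it is what lets points accumulating at a vertex be dispersed consistently onto the incident edges.
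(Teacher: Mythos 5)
There is a genuine gap at the heart of the argument: the ``spreading-and-routing flow'' that is supposed to resolve collisions is never constructed, and as described it almost certainly cannot be. A rule that routes points sharing a $\pi$-fiber ``onto distinct positions of $\Gamma$'' using the edge order and the cyclic order is a discontinuous assignment (which point goes left, which goes right?), and no continuous proper homotopy of that kind is needed or used in the compression trick. Indeed the paper explicitly notes that the homotopy type of $\mathcal{C}_n(\Gamma)$ is independent of the ribbon structure; the cyclic ordering enters only through the local system, not through any dispersal of colliding points. Your fallback claim --- that the relative group $H_*^{BM}(\mathcal{C}_n(\Sigma_\Gamma),\mathcal{C}_n(\Gamma);W)$ vanishes because off-graph configurations are ``carried either onto $\mathcal{C}_n(\Gamma)$ or into the fat diagonal'' --- is exactly the statement that needs proof, and the Poincar\'e--Lefschetz duality remark is a red herring here (it is relevant to the degree-concentration statement of Theorem \ref{athm:twisted}, not to this isomorphism). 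A smaller but real error: a Heisenberg representation $\rho$ does \emph{not} factor through $H_1(\Sigma_\Gamma)$; the coefficient system on $\mathcal{C}_n(\Gamma)$ is simply the restriction along $\pi_1$ of the inclusion, which is all that is needed.

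What actually closes the gap is an excision argument organized around the filtration of Borel--Moore homology by near-collision subspaces, with \emph{no} rerouting at all. One fixes a metric in which the fiberwise compression $h_t(s,x)=((1-t)s,x)$ is $1$-Lipschitz, and writes $H^{BM}_*(\mathcal{C}_n(Y);W)$ as the inverse limit over $\epsilon$ of $H_*(\mathcal{C}_n(Y),\mathcal{C}_n^{\epsilon}(Y);W)$, where $\mathcal{C}_n^{\epsilon}(Y)$ consists of configurations with two points at distance $<\epsilon$. For each $\epsilon$ one chooses $t=t_\epsilon<1$ with $d(h_t(p),h_1(p))<\epsilon/2$; then every configuration in $\Sigma_t=h_t(\Sigma_\Gamma)$ that would collide under $h_1$ already lies in the open set $\mathcal{C}_n^{\epsilon}(\Sigma_t)$, so the closed collision set $Z_t$ can be excised from the pair. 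On the complement of $Z_t$ the map $\mathcal{C}_n(h_1)$ \emph{is} defined and gives a homotopy inverse to the inclusion of $(\mathcal{C}_n(\Gamma),\mathcal{C}_n^{\epsilon}(\Gamma))$, while the $1$-Lipschitz property guarantees all homotopies preserve the pairs. Composing the three isomorphisms (inclusion of $\Sigma_t$, excision of $Z_t$, retraction to $\Gamma$) and letting $\epsilon\to 0$ yields \eqref{eq:ThmIso}. Your write-up correctly identifies the collision set and the Borel--Moore nature of the statement (the annulus example is apt), but without the $\epsilon$-pair/excision mechanism the proof does not go through.
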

Here the local system on  $\mathcal{C}_{n}(\Sigma_\Gamma)$ induces a local system on the subspace $\mathcal{C}_{n}(\Gamma)$. The corresponding Borel-Moore homology is computed from the subcomplex 
$S_*({\mathcal{C}}_n(\Gamma);W)\subset S_*({\mathcal{C}}_n(\Sigma_\Gamma);W)$ in a similar way to \eqref{eq:BorelMoore}, by an analogous inverse limit of quotients of this subcomplex. Note that the homotopy type of the configuration space of a graph does not depend on the ribbon structure, but the local system induced from the thickening does depend.

\begin{proof}
We put a metric on the graph $\Gamma$; the choice is irrelevant but we find it convenient to think that all edges are linear with length $l$ big enough.
The surface $\Sigma_\Gamma$ can be obtained from the graph $\Gamma$ as follows. For each edge $e$ in $\Gamma$ we take a band $[-1, 1]\times e$ equipped with product metric. For each vertex we glue the half ends of the bands according to the cyclic ordering. We obtain the surface $\Sigma_\Gamma$ with the deformation retraction $h_t$, $0\leq t\leq 1$, which sends $(s,x)\in [-1,1]\times e$ to $((1-t)s,x)$. Let us use the metric $d$ on $\Sigma_\Gamma$ defined by the shortest path, where the length of a path is the sum of the length of its intersections with the bands. 
Then we have that  $h_t$ is  $1$-Lipschitz for $0\leq t\leq 1$ and is an embedding for $0\leq t< 1$. The difficulty in the proof is that $h_1$ is not injective so that $\mathcal{C}_n(h_1)$ is only defined outside the collision subset
$$Z=\{c=\{c_1,\dots,c_n\}\subset \Sigma_\Gamma\text{ such that }{h_1(c_i)=h_1(c_j)}\text{ for some $i\neq j$}\}\ ,$$
This will be carried over with an excision argument. 

For $0\leq t<1$, let $\Sigma_t=h_t(\Sigma_\Gamma)$ and 
 $Z_t=Z\cap \mathcal{C}_n(\Sigma_t)$.
We quote that we are using a very specific deformation retraction    for which we have  $Z_t=\mathcal{C}_n(h_t)(Z)$ for $0\leq t <1$.
For $\epsilon > 0$ and $Y\subset \Sigma_\Gamma$ we denote by $\mathcal{C}_n^\epsilon(Y)$   the subspace of configurations \mbox{${c}=\{ c_1,c_2,...,c_n\}\subset Y$} such that $d(c_i , c_j) <\epsilon$ for some $i\ne j$ 
and by $\mathcal{C}_n^{\geq \epsilon}(Y)$ its complement in $\mathcal{C}_n(Y)$. If $Y$ is closed, then $\mathcal{C}_n^{\geq \epsilon}(Y)$ is compact and 
any compact subset $T\subset \mathcal{C}_n(Y)$ is included is some   $\mathcal{C}_n^{\geq \epsilon}(Y)$, which implies
\begin{equation}
\label{eq:BM-as-limit}
H^{BM}_{*}(\mathcal{C}_{n}(Y);W) \;\cong\; \lim_{0\leftarrow \epsilon}H_{*}(\mathcal{C}_{n}(Y), \mathcal{C}_n^\epsilon(Y);W)
\end{equation}

  For $t<1$ the inclusion map
\[
(\mathcal{C}_{n}(\Sigma_t), \mathcal{C}_n^\epsilon(\Sigma_t))\,\subset\,(\mathcal{C}_{n}(\Sigma_\Gamma), \mathcal{C}_n^\epsilon(\Sigma_\Gamma))
\]
is a homotopy equivalence with homotopy inverse $\mathcal{C}_n(h_t)$, which is a map of pairs homotopic to the identity because $h_\nu$ is $1$-Lipschitz for $0\leq \nu\leq t$. So we have an inclusion isomorphism
\begin{equation}
\label{eq:iso1}
H_*(\mathcal{C}_{n}(\Sigma_t), \mathcal{C}_n^\epsilon(\Sigma_t);W)\;\cong\; H_*(\mathcal{C}_{n}(\Sigma_\Gamma), \mathcal{C}_n^\epsilon(\Sigma_\Gamma);W)
\end{equation}
For $\epsilon>0$ we may choose $t=t_\epsilon <1$ such that for all $p \in \Sigma_\Gamma$ we have \mbox{$ d(h_t(p) , h_1(p)) < \tfrac{\epsilon}{2}$}. 
For such $t$, we have that the collision subset $Z_t=Z\cap \mathcal{C}_n(\Sigma_t)$, which is closed,  is contained in the open set $ \mathcal{C}_n^\epsilon(\Sigma_t) $. We therefore get an excision isomorphism
\begin{equation}
\label{eq:iso2}
H_*(\mathcal{C}_{n}(\Sigma_t)\setminus Z_t ,\mathcal{C}_n^\epsilon(\Sigma_t) \setminus Z_t;W) \;\cong\; H_*(\mathcal{C}_{n}(\Sigma_t), \mathcal{C}_n^\epsilon(\Sigma_t);W)
\end{equation}
The inclusion map $$(\mathcal{C}_{n}(\Gamma), \mathcal{C}_n^\epsilon(\Gamma)) \, \longrightarrow \, (\mathcal{C}_{n}(\Sigma_t)\setminus Z_t, \mathcal{C}_n^\epsilon(\Sigma_t)\setminus Z_t)\ ,$$
has an homotopy inverse represented by the restriction of $\mathcal{C}_{n}(h_1)$. An homotopy between the restriction of $\mathcal{C}_{n}(h_1)$ and the identity of $\mathcal{C}_n(\Sigma_t)\setminus Z_t$ is given by the path of restrictions of $\mathcal{C}_{n}(h_\nu)$, $0\leq \nu \leq 1$. Here we use that $h_\nu$ is a contraction and that 
$$\mathcal{C}_{n}(h_\nu)(\mathcal{C}_n(\Sigma_t)\setminus Z_t)=\mathcal{C}_n(\Sigma_{t(1-\nu)})\setminus Z_{t(1-\nu)}
\subset\mathcal{C}_n(\Sigma_t)\setminus Z_t\ ,\ 0\leq s<1\ .$$
 We obtain the isomorphism
\begin{equation}
\label{eq:iso3}
H_{*}(\mathcal{C}_{n}(\Gamma),\mathcal{C}_n^{\epsilon}(\Gamma);W) \;\cong\;
\,H_*(\mathcal{C}_{n}(\Sigma_t)\setminus Z_t ,\mathcal{C}_n^\epsilon(\Sigma_t) \setminus Z_t;W)
\end{equation}
By composing the isomorphisms \eqref{eq:iso3}, \eqref{eq:iso2} and \eqref{eq:iso1}, we obtain the inclusion isomorphism:
\begin{equation}
\label{eq:iso4}
H_{*}(\mathcal{C}_{n}(\Gamma),\mathcal{C}_n^{\epsilon}(\Gamma);W) \;\cong\;
\,H_{*}(\mathcal{C}_{n}(\Sigma_\Gamma), \mathcal{C}_n^{\epsilon}(\Sigma_\Gamma);W).
\end{equation}
The limit $\epsilon \rightarrow 0$ gives the expected  isomorphism \eqref{eq:ThmIso} for Borel-Moore homology.
\end{proof}

\subsection*{Relative ribbon graphs}
T\^ete à t\^ete graphs and twists have been introduced by Norbert A'Campo and extended to a version based on relative ribbon graphs  \cite[Definition 2.2]{AcampoetcAIF}. 
Here we extend the definition of relative ribbon graphs to the case where the components of the subgraph $A$ can be circles or intervals. We will be mostly interested with the case where $A$ is a single interval.

\begin{definition}
A finite graph with relative ribbon structure\index{ribbon structure}  is a finite graph $\Gamma$, a subgraph whose components $A_i$ are oriented circles or intervals, and a cyclic ordering  of the adjacent edges at each vertex, where we ask that the cyclic ordering at a vertex $v$ in $A_i$ is compatible with the orientation of $A_i$.  Here the compatibility condition asks that the cyclic ordering at a vertex in $A$ is represented by a sequence of edges $(e_1,\dots,e_k)$, where $e_1$ is the incoming edge from $A_i$ if any, $e_k$ is the outgoing edge in $A_i$ if any, and all other edges are not in $A$. Here we put  {\em if any} in view that at an initial (resp. terminal) vertex of an interval $A_i$ there is no incoming (resp. outgoing) edge.
\end{definition}
There is a regular thickening $\Sigma_{(\Gamma,A)}$ of a graph with relative ribbon structure. For each edge $e$ in $\Gamma\setminus A$ we take a band $[-1, 1]\times e$, and for an edge $e$ in $A$ a {\em half band} $[0,1]\times e$. For each vertex we glue the half ends of the bands or ends of half bands according to the cyclic ordering.  The embedded graph $\Gamma \subset \Sigma_{(\Gamma,A)}$ is the image in the gluing of $\cup_e \{0\}\times e$. Each $A_i=\{0\}\times A_i$ is a boundary component or an interval in a boundary component with orientation opposite to $\partial \Sigma_\Gamma$.
An example is represented in Figure \ref{modelG}.
\begin{figure}
\centering
\includegraphics[scale=0.6]{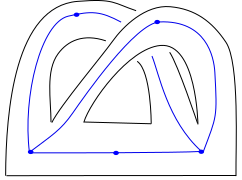}\hspace{2cm}\includegraphics[scale=0.6]{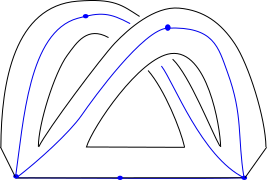}
\caption{Surfaces $\Sigma_\Gamma$ and $\Sigma_{(\Gamma,A)}$ for a relative graph $(\Gamma,A)$, where $A$ is the horizontal interval.}
\label{modelG}
\end{figure}
We have a strong deformation retraction from $(\Sigma_{(\Gamma,A)},A)$ to $(\Gamma,A)$. 
The compression trick used in Theorem \ref{ThmIso} is valid for a relative ribbon graph and we can prove the extended result below. For a pair of spaces $Y\subset X$, we denote by $\mathcal{C}_n(X,Y)\subset \mathcal{C}_n(X)$ the subspace of configurations with at least one point in $Y$.
 \begin{theorem}\label{ThmIso2}
 The inclusion $\mathcal{C}_{n}(\Gamma)\subset \mathcal{C}_{n}(\Sigma_{(\Gamma,A)})$ induces isomorphisms on Borel-Moore homologies with local coefficients defined by a representation $\rho: \Heis(\Sigma_{(\Gamma,A)})\rightarrow GL(W)$.
 \begin{equation}\label{eq:ThmIsor} 
 H_*^{BM}(\mathcal{C}_{n}(\Gamma);W)\cong H_*^{BM}(\mathcal{C}_{n}(\Sigma_{(\Gamma,A)});W)\ .
 \end{equation}
 \begin{equation}\label{eq:ThmIsorr} 
 H_*^{BM}(\mathcal{C}_{n}(\Gamma),\mathcal{C}_{n}(\Gamma,A)  ;W)\cong H_*^{BM}(\mathcal{C}_{n}(\Sigma_{(\Gamma,A)}),\mathcal{C}_n(\Sigma_{(\Gamma,A)},A);W)\ .
 \end{equation}
 \end{theorem}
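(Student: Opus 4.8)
The plan is to re-run the compression trick of Theorem~\ref{ThmIso}, now keeping track of the subspace of configurations that meet $A$. The first isomorphism \eqref{eq:ThmIsor} requires essentially no new idea: the thickening $\Sigma_{(\Gamma,A)}$ is built from bands and half bands exactly as before, the metric $d$ and the deformation retraction $h_t$, with $h_t(s,x)=((1-t)s,x)$, are defined verbatim (on a half band $[0,1]\times e$ this simply retracts onto the core from one side), and $h_t$ is again $1$-Lipschitz, an embedding for $t<1$, with the same non-injective limit $h_1$ and collision set $Z$. Thus the three isomorphisms \eqref{eq:iso1}, \eqref{eq:iso2}, \eqref{eq:iso3} and their composition \eqref{eq:iso4} hold word for word, and the limit $\epsilon\to 0$ gives \eqref{eq:ThmIsor}.

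The new ingredient for \eqref{eq:ThmIsorr} is the observation that $h_t$ fixes $A$ pointwise: by construction $A=\{0\}\times A$ sits on the core of the bands and half bands, and $h_t(0,x)=(0,x)$ for all $t$. Consequently $\mathcal{C}_n(h_\nu)$ carries the subspace $\mathcal{C}_n(\,\cdot\,,A)$ of configurations with at least one point in $A$ into itself at every stage $\nu$. I would therefore enlarge the ``small'' term of every pair appearing in the proof by $\mathcal{C}_n(\,\cdot\,,A)$, replacing $\mathcal{C}_n^\epsilon(Y)$ by $\mathcal{C}_n^\epsilon(Y)\cup\mathcal{C}_n(Y,A)$ throughout, and define the relative Borel--Moore homology as the corresponding inverse limit, in analogy with \eqref{eq:BM-as-limit}.

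With this modification each step goes through. In \eqref{eq:iso1} and \eqref{eq:iso3} the homotopy inverses $\mathcal{C}_n(h_t)$, resp.\ $\mathcal{C}_n(h_1)$, and the connecting homotopies $\mathcal{C}_n(h_\nu)$ are $1$-Lipschitz and fix $A$, hence preserve both $\mathcal{C}_n^\epsilon$ and $\mathcal{C}_n(\,\cdot\,,A)$; they therefore define homotopy equivalences of the enlarged pairs and induce isomorphisms on the corresponding relative homology. For the excision \eqref{eq:iso2} one only needs $\overline{Z_t}$ to lie in the interior of the enlarged subspace; since $Z_t\subset\mathcal{C}_n^\epsilon(\Sigma_t)$ for $t=t_\epsilon$ and $\mathcal{C}_n^\epsilon$ is open, this holds automatically. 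Composing and letting $\epsilon\to 0$ then yields \eqref{eq:ThmIsorr}.

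The point that needs the most care — and which I expect to be the main obstacle — is checking that $\mathcal{C}_n(h_1)$ really does send the $A$-configurations of $\mathcal{C}_n(\Sigma_t)\setminus Z_t$, up to homotopy, onto $\mathcal{C}_n(\Gamma,A)$. A priori a point lying in $A$ could collide under $h_1$ with a point of a band whose core runs into $A$; but any such configuration lies in $Z$ and has already been excised, so on $\mathcal{C}_n(\Sigma_t)\setminus Z_t$ the restriction of $h_1$ is injective, fixes the $A$-coordinate, and retracts onto $\Gamma$, whence $\mathcal{C}_n(\Gamma,A)\subset\mathcal{C}_n(\Sigma_t,A)\setminus Z_t$ and the restricted retraction land where they should. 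Verifying this compatibility of the collision set $Z$ with the $A$-stratum is the one place where the relative structure interacts nontrivially with the compression, and it is what makes the argument more than a formal repetition of Theorem~\ref{ThmIso}.
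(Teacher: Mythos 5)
Your proposal is correct and follows the same route the paper intends: Theorem \ref{ThmIso2} is justified there by the remark that the compression trick of Theorem \ref{ThmIso} carries over to the relative thickening, and your elaboration --- that $h_t$ fixes $A=\{0\}\times A$ pointwise and is $1$-Lipschitz, hence preserves both $\mathcal{C}_n^\epsilon(\cdot)$ and $\mathcal{C}_n(\cdot,A)$, so that each of \eqref{eq:iso1}, \eqref{eq:iso2}, \eqref{eq:iso3} holds for the pairs enlarged by the $A$-configurations --- is exactly the verification the paper leaves implicit. The excision condition and the compatibility of the collision set $Z$ with the $A$-stratum that you single out are handled correctly.
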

\section{A complex for Borel-Moore homology of graph configurations}
Configurations of points in a graph have been intensively studied. It is proved in \cite{Abrams,PrueScrimshaw,FarleySabalka} that a discrete version of the graph configuration space, where cells at infinity are removed, is homotopy equivalent to the full configuration space when the graph is enough subdivided, which allows to give a finite cell decomposition. Presentations for the fundamental group (the graph braid group) and the homology follow. The presentation of homology can be significantly reduced using discrete Morse theory.

In view of Borel-Moore homology, we may work with a finite cell decomposition which does not confer a CW-complex structure, but nevertherless gives a skeleton filtration which works similarly for computing Borel-Moore homology.

Let $\Gamma$ be a finite graph with set of vertices $V$ and set of edges  $E$. Let $n\geq 2$, and suppose that $\sharp V\geq n$, then we set $\mathcal{C}_n^0(\Gamma)=\mathcal{C}_n(V)$ and for $0<k\leq n$, 
$\mathcal{C}_n^k(\Gamma)$ is the union of $\mathcal{C}_n^{k-1}(\Gamma)$ and the $k$-dimensional cells 
$$\prod_{x\in V_0} x \times \prod_{e\in E} \mathcal{C}_{k_e}(e)\ , \ V_0\subset V\ , \ \sharp V_0=k_0\ , \ k_0+\sum_e k_e=k_0+k=n\ .$$

In the case $n=2$, the $0$-cells are pairs in $V$, the $1$-cells are $v\times e$, $v\in V$, $e\in E$, the $2$-cells are either squares $e\times f$, $e\neq f$, or simplices  $\mathcal{C}_{2}(e)$. The $k$-cells are attached to $\mathcal{C}_n^{k-1}(\Gamma)$ along the faces which are not diagonal. Each $k$-cell is properly embedded in $\mathcal{C}_n^{k}(\Gamma)$ and will contribute to a $\Z$ summand in the homology $H_k^{BM}({\mathcal{C}}_n^{k}(\Gamma),{\mathcal{C}}_{n}^{k-1}(\Gamma))$. If the graph $\Gamma$ has a ribbon structure, then  we have a corresponding filtration of the Heisenberg cover $\widetilde{\mathcal{C}}_n^k(\Gamma)$. Then a lift of a $k$-cell generates
a  $\Z[\Heis]$ summand in the homology $H_k^{BM}(\widetilde{\mathcal{C}}_n^{k}(\Gamma),\widetilde{\mathcal{C}}_{n}^{k-1}(\Gamma))$ and a $W$ summand in the homology $H_k^{BM}({\mathcal{C}}_n^{k}(\Gamma),{\mathcal{C}}_{n}^{k-1}(\Gamma);W)$.

If we fix an orientation of $\Gamma$, then the cells are oriented and we have an incidence sign $[E^k;F^{k-1}]$ for an off diagonal face $F^{k-1}$ of a $k$-cell $E^k$ which is a product of open simplices.
The Borel-Moore cellular chain complex\index{Borel-Moore cellular chain complex} of $\mathcal{C}_n(\Gamma)$ 
is the graded module $C_*^{BM}(\mathcal{C}_n(\Gamma))=\oplus_k C_k^{BM}(\mathcal{C}_n(\Gamma))$ freely generated in degree $k$ by the $k$-cells, with boundary map given by incidence numbers.
If the oriented edge $e$ has boundary $w-v$ in the graph $\Gamma$, then the boundary of the edge $u\times e$, $u\in V$, $e\in E$,
is \begin{equation}
\partial(u\times e)=\begin{cases}
\{u,w\}-\{u,v\}&\text{ if $u\notin\{v,w\}$}\\
\{u,w\} &\text{ if $u=v$}\\
-\{u,v\}& \text{ if $u=w$}
\end{cases}
\end{equation}

The Borel-Moore cellular chain complex of a regular $G$-covering $\widetilde{\mathcal{C}}_n(\Gamma)$ of  $\mathcal{C}_n(\Gamma)$
is the graded $\Z$-module $C_*^{BM}(\widetilde{\mathcal{C}}_n(\Gamma))$ freely generated in degree $k$ by all lifts of the $k$-cells, with boundary map given by incidence numbers. It is convenient to choose a preferred lift $\tilde E$ for each cell $E$ in $\mathcal{C}_n(\Gamma)$  ; this can be done by using a path or tether from the base configuration  to $E$. Then $C^{BM}_k(\widetilde{\mathcal{C}}_n(\Gamma))$ is freely generated over $\Z[G]$ by the (lifted) $k$-cells in $\mathcal{C}_n(\Gamma)$. Now the  incidence number $[\tilde E^k;\tilde F^{k-1}]$ is the previous sign times an element in $G$ which counts the deck action from the lift  $\tilde F^{k-1}$ to the actual face of the lift  $\tilde E^k$.
Given a representation $\rho: G\rightarrow GL(W)$, we get a complex suitable for local coefficients in $W$
$$C_*^{BM}({\mathcal{C}}_n(\Gamma);W)=C_*^{BM}(\widetilde{\mathcal{C}}_n(\Gamma))\otimes_{\Z[G]} W\ .$$
We plan to give elsewhere a general treatment for Borel-Moore cell complexes. The result for graph configurations is the following.
\begin{theorem}\label{thm:BMcomplex}
Let $\Gamma$ be a finite graph, $n\geq 2$. For a representation $\rho: \mathbb{B}_n(\Gamma)\rightarrow GL(W)$,
the Borel-Moore homology $H_*^{BM}({\mathcal{C}}_n(\Gamma);W)$ is canonically isomorphic to the homology of the Borel-Moore cellular chain complex $C_*^{BM}({\mathcal{C}}_n(\Gamma);W)$.
\end{theorem}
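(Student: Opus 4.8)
The plan is to compute $H_*^{BM}(\mathcal{C}_n(\Gamma);W)$ through the finite skeleton filtration $\mathcal{C}_n^0(\Gamma)\subset\cdots\subset\mathcal{C}_n^n(\Gamma)=\mathcal{C}_n(\Gamma)$ and to show that the exact couple it produces in Borel--Moore homology has the cellular complex $C_*^{BM}(\mathcal{C}_n(\Gamma);W)$ as its $E^1$-page, concentrated in a single row. First I would record the structural properties of Borel--Moore homology that the argument needs: for a closed subspace $A\subset X$ one has the excision identification $H_*^{BM}(X,A;W)\cong H_*^{BM}(X\setminus A;W)$ together with a long exact sequence of the pair, and for a finite disjoint union of properly embedded open pieces the Borel--Moore homology splits as the direct sum of the contributions. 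Because $\Gamma$ is finite and $n$ is fixed, the whole decomposition has only finitely many cells, so the defining inverse limits \eqref{eq:BorelMoore} are all tame (Mittag--Leffler, so that no $\varprojlim^1$ obstruction appears and the long exact sequences survive passage to the limit).

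Next I would verify that the filtration is by closed subspaces and identify the relative terms. Within $\mathcal{C}_n^k(\Gamma)$ the complement of $\mathcal{C}_n^{k-1}(\Gamma)$ consists of the configurations with exactly $k$ points on open edges; near such a configuration every nearby configuration lying in $\mathcal{C}_n^{k}(\Gamma)$ must keep its remaining $n-k$ points at their vertices (otherwise the edge-count would exceed $k$), so this complement is open and $\mathcal{C}_n^{k-1}(\Gamma)$ is closed in $\mathcal{C}_n^{k}(\Gamma)$. That complement is the disjoint union of the open $k$-cells $\prod_{x\in V_0}x\times\prod_{e\in E}\mathcal{C}_{k_e}(e)$, each a product of open simplices, hence homeomorphic to $\mathbb{R}^k$ and properly embedded as asserted. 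Using the excision identification, the computation $H_*^{BM}(\mathbb{R}^k)\cong\mathbb{Z}$ in degree $k$, the triviality of the cover over a contractible cell, and the preferred lift $\tilde E$, I obtain
\[
H_*^{BM}(\mathcal{C}_n^{k}(\Gamma),\mathcal{C}_n^{k-1}(\Gamma);W)\cong\bigoplus_{E^k}W,
\]
concentrated in degree $k$, which is exactly $C_k^{BM}(\mathcal{C}_n(\Gamma);W)$.

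I would then identify the first differential of the exact couple, the connecting map $\partial\colon H_k^{BM}(\mathcal{C}_n^{k},\mathcal{C}_n^{k-1};W)\to H_{k-1}^{BM}(\mathcal{C}_n^{k-1},\mathcal{C}_n^{k-2};W)$ of the triple, with the cellular boundary. This is a local computation: the coefficient of a cell $F^{k-1}$ in $\partial[E^k]$ is read off in a neighborhood of the open off-diagonal face $F^{k-1}$ inside the closure of $E^k$, where the product-of-simplices structure yields the incidence sign $[\tilde E^k;\tilde F^{k-1}]$ and the comparison of the preferred lift $\tilde F^{k-1}$ with the actual face of $\tilde E^k$ produces the deck-group element; the model case is precisely the edge boundary formula for $\partial(u\times e)$ recorded above. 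Since the relative terms vanish outside degree $k$, the $E^1$-page is concentrated in a single row, the exact couple degenerates after this first differential, and $H_*^{BM}(\mathcal{C}_n(\Gamma);W)$ is identified with the homology of $C_*^{BM}(\mathcal{C}_n(\Gamma);W)$; naturality of the connecting homomorphisms gives the canonicity.

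I expect the main obstacle to be entirely on the Borel--Moore side rather than combinatorial: establishing the excision identification $H_*^{BM}(X,A;W)\cong H_*^{BM}(X\setminus A;W)$ and the associated long exact sequence directly from the inverse-limit definition \eqref{eq:BorelMoore} for these spaces, which are \emph{not} CW-complexes because of the cells at infinity, and confirming that the finiteness of the decomposition genuinely kills the $\varprojlim^1$ terms so that the exact sequences pass to the limit. Once these structural facts are secured, the identification of the differential with the incidence numbers and the collapse of the exact couple are routine; this is also the point at which the promised general treatment of Borel--Moore cell complexes would be invoked.
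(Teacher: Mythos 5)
Your proposal is correct and follows exactly the route the paper intends: the paper in fact prints no proof of Theorem~\ref{thm:BMcomplex} (it explicitly defers the general treatment of Borel--Moore cell complexes to a later paper), but its surrounding discussion --- the skeleton filtration $\mathcal{C}_n^{k}(\Gamma)$, the properly embedded open cells each contributing a $W$ summand to $H_k^{BM}({\mathcal{C}}_n^{k}(\Gamma),{\mathcal{C}}_{n}^{k-1}(\Gamma);W)$, and the incidence-number boundary with deck-transformation coefficients --- is precisely the exact-couple argument you spell out. The single technical point you isolate as the real obstacle (excision and the long exact sequence for Borel--Moore homology of these non-CW filtrations with properly embedded skeleta, together with the vanishing of the ${\varprojlim}^{1}$ terms coming from the finiteness of the cell decomposition) is exactly what the paper's promised general treatment is meant to supply, so your write-up is an adequate stand-in for the missing proof.
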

For a relative graph $(\Gamma,A)$ we have a relative Borel-Moore cellular chain complex $C_*^{BM}({\mathcal{C}}_n(\Gamma),{\mathcal{C}}_n(\Gamma,A))$ freely generated in degree $k$ by the $k$-dimensional cells 
$$\prod_{x\in V_0} x\times \prod_{e\in E_{\Gamma\setminus A}} \mathcal{C}_{k_e}(e)\ , \ V_0\subset V_{\Gamma\setminus A}\ , \ \sharp V_0=k_0\ , \ k_0+\sum_e k_e=k_0+k=n\ ,$$
where $V_{\Gamma\setminus A}$ (resp. $E_{\Gamma\setminus A}$) denote the set of vertices (resp. edges) not in $A$.
Proceeding as before, we also have a relative Borel-Moore cellular chain complex with local coefficients
$ C_*^{BM}({\mathcal{C}}_n(\Gamma),\mathcal{C}_n(\Gamma,A) ;W)$ which computes the homology
$ H_*^{BM}({\mathcal{C}}_n(\Gamma),\mathcal{C}_n(\Gamma,A) ;W)$.

A surface $\Sigma_{g,m}$ with an interval $A$ in its boundary can be obtained as the regular thickening of a relative graph $(\Gamma,A)$, where $\Gamma \setminus A$ contains $2g+m-1$ edges $e_1,\dots, e_{2g+m-1}$ going from the end vertex of $A$ to the origin vertex of $A$. In this situation, the relative Borel-Moore complex has only $n$-cells corresponding to the partitions $n=n_1+\dots+n_{2g+m-1}$.
 From Theorems \ref{ThmIso2} and \ref{thm:BMcomplex} we obtain the following result which was proved in the case $m=1$ in  \cite{HeisenbergHomology}.
\begin{theorem}
\label{athm:twisted}
Let $\Sigma_{g,m}$ be a genus $g$ surface with $m>0$ boundary components, and $A\subset \partial \Sigma_{g,m}$ be an interval.
Let $n\geq 2$ and let $W$ be a representation of the discrete Heisenberg group $\Heis = \Heis(\Sigma_{g,m})$ over a ring $R$.\\
  The Borel-Moore homology module $H_n^{BM}(\mathcal{C}_{n}(\Sigma_{g,m})),\mathcal{C}_{n}(\Sigma_{g,m},A) ;W)$ is isomorphic to the direct sum of $\left(
\begin{array}{c}
2g+m+n-2 \\
n \\
\end{array}
\right)$ copies of $W$.
Furthermore, $H_*^{BM}(\mathcal{C}_{n}(\Sigma_{g,m}),\mathcal{C}_{n}(\Sigma_{g,m},A) ;W)$ vanishes for $*\neq n$.
\end{theorem}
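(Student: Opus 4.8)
The plan is to follow the reduction sketched just above the statement: realize $\Sigma_{g,m}$ as the regular thickening of an explicit relative ribbon graph, move the computation onto the graph via Theorem \ref{ThmIso2}, and then read the answer off the relative Borel--Moore cellular complex of Theorem \ref{thm:BMcomplex}. First I would fix $(\Gamma,A)$ with exactly two vertices $v_0,v_1$, namely the endpoints of the interval $A$, the single edge $A$ from $v_0$ to $v_1$, and $2g+m-1$ further edges $e_1,\dots,e_{2g+m-1}$ each running from $v_1$ back to $v_0$. Choosing the cyclic orderings at $v_0$ and $v_1$ compatibly with the orientation of $A$ gives a thickening $\Sigma_{(\Gamma,A)}$ with $A$ in its boundary; a short check of the boundary-tracing shows this ribbon structure produces genus $g$ and $m$ boundary components (the Euler characteristic $2-(2g+m)$ and first Betti number $2g+m-1$ already match), so $\Sigma_{(\Gamma,A)}\cong\Sigma_{g,m}$ with the prescribed $A\subset\partial\Sigma_{g,m}$.

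Granting this model, I would apply the relative compression isomorphism \eqref{eq:ThmIsorr} of Theorem \ref{ThmIso2} to identify $H_*^{BM}(\mathcal{C}_n(\Sigma_{(\Gamma,A)}),\mathcal{C}_n(\Sigma_{(\Gamma,A)},A);W)$ with the graph group $H_*^{BM}(\mathcal{C}_n(\Gamma),\mathcal{C}_n(\Gamma,A);W)$, and then compute the latter from the relative Borel--Moore cellular complex $C_*^{BM}(\mathcal{C}_n(\Gamma),\mathcal{C}_n(\Gamma,A);W)$ of Theorem \ref{thm:BMcomplex}. The key structural point is that both vertices of $\Gamma$ lie on $A$, so $V_{\Gamma\setminus A}=\emptyset$: every generating cell $\prod_{e\in E_{\Gamma\setminus A}}\mathcal{C}_{k_e}(e)$ then has $k_0=0$, hence dimension $k=\sum_e k_e=n$. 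Thus the relative complex is concentrated in the single degree $n$, its differentials are forced to vanish, and we conclude at once that $H_*^{BM}(\mathcal{C}_n(\Gamma),\mathcal{C}_n(\Gamma,A);W)=0$ for $*\neq n$ while $H_n^{BM}$ equals the degree-$n$ chain module.

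It then remains to count the $n$-cells and identify the coefficients. The cells are indexed by the distributions of the $n$ points among the $2g+m-1$ edges $e_i$, i.e. by the nonnegative solutions of $n=n_1+\cdots+n_{2g+m-1}$, of which there are $\binom{n+(2g+m-1)-1}{n}=\binom{2g+m+n-2}{n}$. Since $C_n^{BM}(\mathcal{C}_n(\Gamma),\mathcal{C}_n(\Gamma,A);W)$ is obtained by tensoring the $\Z[\Heis]$-module freely generated on one chosen lift of each such cell with $W$ over $\Z[\Heis]$, each cell contributes exactly one copy of $W$, giving the claimed direct sum of $\binom{2g+m+n-2}{n}$ copies of $W$.

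The step I expect to demand the most care is the applicability of Theorem \ref{thm:BMcomplex} to this particular $\Gamma$, since its skeleton filtration was set up under the hypothesis $\sharp V\geq n$, whereas here $\sharp V=2$. However, the cells excluded by that hypothesis are exactly the low-dimensional ones that park points at vertices, and every vertex lies in $A$, so these are precisely the cells already killed in the relative complex. I would therefore either argue directly that the stratification of $\mathcal{C}_n(\Gamma)$ by combinatorial type restricts, on the complement of $\mathcal{C}_n(\Gamma,A)$, to a disjoint union of open $n$-cells computing the relative Borel--Moore homology, or simply subdivide the edge $A$ to arrange $\sharp V\geq n$ without introducing any vertex in $\Gamma\setminus A$, leaving $V_{\Gamma\setminus A}=\emptyset$ and the above count intact.
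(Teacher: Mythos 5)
Your proposal follows essentially the same route as the paper: realize $(\Sigma_{g,m},A)$ as the thickening of the relative ribbon graph with $2g+m-1$ edges of $\Gamma\setminus A$ joining the endpoints of $A$, reduce to the graph via Theorem~\ref{ThmIso2}, and observe that the relative cellular complex of Theorem~\ref{thm:BMcomplex} is concentrated in degree $n$ with one generator per composition $n=n_1+\cdots+n_{2g+m-1}$, giving $\binom{2g+m+n-2}{n}$ copies of $W$. Your closing remark about the hypothesis $\sharp V\geq n$ (resolved by subdividing $A$) is a point the paper passes over silently, and your fix is the right one.
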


\section{Action of mapping classes}
Here we work with a compact oriented surface $\Sigma=\Sigma_{g,m}$ with non empty boundary and fixed  based loops, $\alpha_1,\dots,\alpha_g,\beta_1,\dots,\beta_g,\gamma_1,\dots,\gamma_{m-1}$, as depicted in Figure \ref{modelSurf}. We denote by $\partial^1\Sigma$ a non empty subset of the boundary which is a union of connected components. The $\partial^1\Sigma$-relative
{Mapping Class Group}  $\mathfrak{M}(\Sigma,\partial^1\Sigma)$ is the group of orientation preserving homeomorphisms of $\Sigma$ fixing  pointwise $\partial^1\Sigma$, modulo isotopies relative to $\partial^1\Sigma$. The isotopy class of a homeomorphism $f$ is denoted by $[f]$. An oriented self-homeomorphism  $f \colon \Sigma \rightarrow \Sigma$, fixing  pointwise  $\partial^1\Sigma$, gives a homeomorphism $\mathcal{C}_{n}(f) \colon \mathcal{C}_{n}(\Sigma) \rightarrow \mathcal{C}_{n}(\Sigma)$, defined by $\{c_{1},c_{2},\ldots ,c_{n}\} \mapsto \{f(c_{1}),f(c_{2}),\ldots ,f(c_{n})\}$. We choose the base configuration in $\partial^1\Sigma$ so that it is fixed by $\mathcal{C}_n(f)$, then 
$\mathcal{C}_{n}(f)$
induces a homomorphism $f_{\mathbb{B}_{n}(\Sigma)}  \colon \mathbb{B}_{n}(\Sigma) \rightarrow \mathbb{B}_{n}(\Sigma)$, which depends only on the isotopy class $[f]$. 
 
\subsection{Action on the Heisenberg group}
We first study the induced action on the Heisenberg group quotient $\Heis(\Sigma)$. 
From Proposition \ref{hom_phi} we have a quotient map $\phi:  \mathbb{B}_{n}(\Sigma)\rightarrow \Heis(\Sigma)$.
We denote by $\mathrm{Aut}^+(\mathcal{\Heis}(\Sigma))$ the group of automorphisms of $\Heis(\Sigma)$ which are identity on $\phi(\sigma_1)$. 
\begin{proposition}
\label{f_Heisenberg}
The functorial automorphism $\mathbb{B}_n(f):  \mathbb{B}_{n}(\Sigma)\rightarrow \mathbb{B}_{n}(\Sigma)$ induces an action 
\begin{equation}\label{eq:action_on_Heis}
\begin{array}{lrll}
\Psi \colon & \mathfrak{M}(\Sigma,\partial^1\Sigma)&\rightarrow &\mathrm{Aut}^+(\Heis(\Sigma))\\
&f &\mapsto &f_\Heis 
\end{array}
\end{equation}
\end{proposition}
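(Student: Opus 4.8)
The plan is to show that the functorial automorphism $\mathbb{B}_n(f)$ carries the kernel of $\phi$ to itself, so that it descends through $\phi$ to an automorphism $f_\Heis$ of $\Heis(\Sigma)$; that this descended automorphism fixes $u=\phi(\sigma_1)$, placing it in $\mathrm{Aut}^+(\Heis(\Sigma))$; and finally that $f\mapsto f_\Heis$ respects composition and depends only on the isotopy class. Everything reduces to the single identity
\[
\phi\bigl(\mathbb{B}_n(f)(\sigma_1)\bigr)=u,
\]
which I treat first and regard as the crux.

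First I would establish that $\mathbb{B}_n(f)(\sigma_1)$ is conjugate to $\sigma_1$ in $\mathbb{B}_n(\Sigma)$. Represent $\sigma_1$ by the loop in $\mathcal{C}_n(\Sigma)$ in which the two base points $*_1,*_2$ perform a positive half-twist supported in a small embedded disk $D$ meeting the base configuration exactly in $\{*_1,*_2\}$. Since the base configuration is fixed pointwise by $f$, the image loop $\mathbb{B}_n(f)(\sigma_1)$ is the half-twist of $*_1,*_2$ supported in the embedded disk $f(D)$, which again meets the configuration only in $\{*_1,*_2\}$, and the twist is again positive because $f$ is orientation preserving. As the space of embedded disks in $\Sigma$ that contain $*_1,*_2$ in their interior and avoid the other base points is connected, there is an isotopy carrying $f(D)$ back to $D$; it traces a braid $w\in\mathbb{B}_n(\Sigma)$ conjugating the half-twist in $f(D)$ to the one in $D$, namely $\sigma_1$. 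Hence $\mathbb{B}_n(f)(\sigma_1)$ is a conjugate of $\sigma_1$, and since $u=\phi(\sigma_1)$ is central in $\Heis(\Sigma)$ we get $\phi(\mathbb{B}_n(f)(\sigma_1))=u$. This disk-isotopy argument, together with the role of orientation preservation in ruling out $\sigma_1^{-1}$ (whose image would be $u^{-1}\neq u$), is the main obstacle.

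Granting this, the remaining steps are formal. The kernel $N=[\sigma_1,\mathbb{B}_n(\Sigma)]^N$ is normally generated by the commutators $[\sigma_1,x]$, so to see $\mathbb{B}_n(f)(N)\subseteq N=\ker\phi$ it suffices to check that $\phi$ kills each $\mathbb{B}_n(f)([\sigma_1,x])=[\mathbb{B}_n(f)(\sigma_1),\mathbb{B}_n(f)(x)]$; applying $\phi$ and using centrality of $\phi(\mathbb{B}_n(f)(\sigma_1))=u$ makes the commutator trivial. Running the same argument for $f^{-1}$ gives $\mathbb{B}_n(f)(N)=N$, so $\mathbb{B}_n(f)$ descends to an automorphism $f_\Heis$ of $\Heis(\Sigma)=\mathbb{B}_n(\Sigma)/N$ characterised by $\phi\circ\mathbb{B}_n(f)=f_\Heis\circ\phi$. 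Evaluating this relation on $\sigma_1$ yields $f_\Heis(u)=\phi(\mathbb{B}_n(f)(\sigma_1))=u$, so $f_\Heis\in\mathrm{Aut}^+(\Heis(\Sigma))$.

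Finally I would check that $\Psi$ is a genuine action. Because the base configuration is fixed pointwise, $\mathbb{B}_n(f)$ is an honest homomorphism on $\pi_1$ with no basepoint-change indeterminacy, and it depends only on $[f]$ since an isotopy relative to $\partial^1\Sigma$ fixes the basepoint throughout; functoriality $\mathbb{B}_n(f\circ g)=\mathbb{B}_n(f)\circ\mathbb{B}_n(g)$ then descends through $\phi$ to $(f\circ g)_\Heis=f_\Heis\circ g_\Heis$, so $f\mapsto f_\Heis$ is the required homomorphism $\Psi\colon\mathfrak{M}(\Sigma,\partial^1\Sigma)\to\mathrm{Aut}^+(\Heis(\Sigma))$.
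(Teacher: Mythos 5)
Your proof is correct and follows essentially the same route as the paper: since $\ker\phi$ is normally generated by the commutators $[\sigma_1,x]$, everything reduces to the identity $\phi(\mathbb{B}_n(f)(\sigma_1))=u$, after which the descent to $f_\Heis\in\mathrm{Aut}^+(\Heis(\Sigma))$ and the functoriality are formal. The only difference is how that identity is obtained --- the paper isotopes $f$ to be the identity on a neighbourhood of the base configuration so that $f_{\mathbb{B}_{n}(\Sigma)}$ fixes $\sigma_1$ outright, whereas you show $\mathbb{B}_n(f)(\sigma_1)$ is a conjugate of $\sigma_1$ (using orientation preservation to fix the sign) and then invoke centrality of $u$; this is a slightly longer but equivalent argument.
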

\begin{proof}
Up to isotopy we may suppose that $f\in\mathfrak{M}(\Sigma,\partial^1\Sigma) $ is identity in a neighbourhood of the base configuration. It follows that $f_{\mathbb{B}_{n}(\Sigma)}$ fixes $\sigma_1$. We deduce from Proposition \ref{hom_phi} that $f_{\mathbb{B}_{n}(\Sigma)}$ sends $\ker(\phi)$ to itself. 
It follows that there exists a unique automorphism $f_{\Heis}\in \mathrm{Aut}^+(\Heis(\Sigma))$ such that the following square commutes
\begin{equation}
\label{eq:projection-equivariance}
  \begin{tikzcd}
     \mathbb{B}_{n}(\Sigma) \arrow[d,swap, "\phi"]\arrow[r, "f_{\mathbb{B}_{n}(\Sigma)}"] & \mathbb{B}_{n}(\Sigma) \arrow[d,"\phi"]\\
     \Heis(\Sigma) \arrow[r, "f_{\Heis}"]& \Heis(\Sigma)
  \end{tikzcd}
\end{equation}
 which completes the proof.
\end{proof}

\subsection{Twisted action of the Mapping Class Group}
We fix the base configuration in $\partial^1\Sigma$.
For a mapping class in $\mathfrak{M}(\Sigma, \partial^1\Sigma)$, we take a representative $f$ and denote by $\mathcal{C}_n(f)$ the corresponding homeomorphism of the configuration space $\mathcal{C}_n(\Sigma)$.
For a representation $\rho: \Heis(\Sigma)\rightarrow GL(W)$ and $\tau\in \mathrm{Aut}^+(\Heis(\Sigma))$, we denote by
${}_{\tau}\!W $ the twisted representation $\rho\circ \tau$.

\begin{theorem}
There is a natural twisted representation of the mapping class group $\mathfrak{M}(\Sigma,\partial^1\Sigma)$ on 
\begin{equation*}
H_*\bigl( \mathcal{C}_n(\Sigma_g),{}_\tau\!W\bigr) \ , \quad \tau \in \mathrm{Aut}^+(\Heis(\Sigma))\ ,\ 
\end{equation*}
where the action of $f\in \mathfrak{M}(\Sigma,\partial^1\Sigma)$ is
\begin{equation*}
\label{twisted}
\mathcal{C}_n(f)_* \colon H_*\bigl( \mathcal{C}_n(\Sigma_g) , {}_{\tau\circ f_\Heis}\!W\bigr) \longrightarrow H_*\bigl( \mathcal{C}_n(\Sigma) ,{}_{\tau}\!W \bigr)
\end{equation*}
\end{theorem}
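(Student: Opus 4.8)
The plan is to realize the action on the Heisenberg cover and then transport it to the twisted coefficient complex. First I would fix, for a mapping class $[f]$, a representative $f$ that is the identity on a neighbourhood of the base configuration; this is possible because the base configuration lies in $\partial^1\Sigma$, which $f$ fixes pointwise, exactly as in the proof of Proposition \ref{f_Heisenberg}. By that proposition $f_{\mathbb{B}_n(\Sigma)}$ preserves $\ker\phi$, so $\mathcal{C}_n(f)$ lifts to a homeomorphism $\widetilde{\mathcal{C}}_n(f)$ of the Heisenberg cover $\widetilde{\mathcal{C}}_n(\Sigma)$. I would normalize the lift by requiring it to fix the preferred lift of the base configuration, which pins it down uniquely.

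The structural point I would establish next is the twisted equivariance of this lift with respect to the deck action. Following a lifted loop representing a class $h\in\Heis$ and applying $\widetilde{\mathcal{C}}_n(f)$, the standard monodromy computation gives, for the right $\Z[\Heis]$-module structure on chains,
\begin{equation*}
\widetilde{\mathcal{C}}_n(f)_\#(c\cdot h)=\widetilde{\mathcal{C}}_n(f)_\#(c)\cdot f_\Heis(h)\ ,\qquad h\in\Heis\ ,
\end{equation*}
where $f_\Heis=\Psi(f)$ is the automorphism of Proposition \ref{f_Heisenberg}. This is precisely the compatibility needed to reconcile the two coefficient twistings.

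With this in hand I would define the comparison map on twisted complexes. Writing $C_\bullet(\tau)=S_*(\widetilde{\mathcal{C}}_n(\Sigma))\otimes_{\Z[\Heis]}{}_{\tau}\!W$, I set
\begin{equation*}
\Phi(c\otimes w)=\widetilde{\mathcal{C}}_n(f)_\#(c)\otimes w\colon\quad C_\bullet(\tau\circ f_\Heis)\longrightarrow C_\bullet(\tau)\ .
\end{equation*}
Well-definedness over $\Z[\Heis]$ is the crux: moving $h$ across the tensor in the source invokes the action $\rho(\tau(f_\Heis(h)))$, while the equivariance relation rewrites $c\cdot h$ as $\widetilde{\mathcal{C}}_n(f)_\#(c)\cdot f_\Heis(h)$, which across the tensor in the target also invokes $\rho(\tau(f_\Heis(h)))$; the two agree, so $\Phi$ descends. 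Passing to homology yields the stated map $\mathcal{C}_n(f)_*\colon H_*(\mathcal{C}_n(\Sigma),{}_{\tau\circ f_\Heis}\!W)\to H_*(\mathcal{C}_n(\Sigma),{}_{\tau}\!W)$.

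Finally I would verify the representation axioms and naturality. Independence of the representative follows because two choices are isotopic rel $\partial^1\Sigma$, so $\mathcal{C}_n(f)$ and its normalized lift are determined up to equivariant homotopy and induce the same map on homology. For functoriality I would use that normalized lifts compose, $\widetilde{\mathcal{C}}_n(fg)=\widetilde{\mathcal{C}}_n(f)\circ\widetilde{\mathcal{C}}_n(g)$, together with $(fg)_\Heis=f_\Heis\circ g_\Heis$; substituting $\tau\mapsto\tau\circ f_\Heis$ in the $g$-action and composing with the $f$-action recovers the $fg$-action from $H_*(\mathcal{C}_n(\Sigma),{}_{\tau\circ(fg)_\Heis}\!W)$ to $H_*(\mathcal{C}_n(\Sigma),{}_{\tau}\!W)$, so the family of maps over all $\tau$ assembles into the asserted twisted representation. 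I expect the main obstacle to be bookkeeping the variance and base-point conventions so that the direction of $f_\Heis$ is the one that cancels the coefficient twist rather than doubling it.
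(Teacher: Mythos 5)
Your proposal is correct and follows essentially the same route as the paper's own (sketched) proof: lift $\mathcal{C}_n(f)$ to the Heisenberg cover, use the twisted equivariance $\widetilde{\mathcal{C}}_n(f)_\#(c\cdot h)=\widetilde{\mathcal{C}}_n(f)_\#(c)\cdot f_\Heis(h)$ from Proposition \ref{f_Heisenberg}, and check that $c\otimes w\mapsto \widetilde{\mathcal{C}}_n(f)_\#(c)\otimes w$ descends through the tensor product over $\Z[\Heis(\Sigma)]$ to a chain map $S_*(\mathcal{C}_n(\Sigma);{}_{\tau\circ f_\Heis}\!W)\to S_*(\mathcal{C}_n(\Sigma);{}_{\tau}\!W)$. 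You supply more detail than the paper on normalizing the lift, independence of the representative, and functoriality, but the underlying argument is the same.
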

There are similar twisted representations on the Borel-Moore homologies
$H_*^{BM}\bigl( \mathcal{C}_n(\Sigma);{}_\tau\!W \bigr)$ or on the relative versions 
$H_*\bigl( \mathcal{C}_n(\Sigma), \mathcal{C}_n(\Sigma,A);{}_\tau\!W \bigr)$, $H_*^{BM}\bigl( \mathcal{C}_n(\Sigma), \mathcal{C}_n(\Sigma,A);{}_\tau\!W \bigr)$ for $A$ an interval in  $\partial^1\Sigma$.
The above theorem is proved in \cite{HeisenbergHomology} in the case of a surface with one boundary component. We give a sketch proof below.

\begin{proof}
Recall that the homology with coefficients in $W$ is computed from the complex $S_*(\mathcal{C}_n(\Sigma);W):=S_*(\widetilde{\mathcal{C}}_n(\Sigma))\otimes_{\Z[\Heis(\Sigma)]} W$.
The action of $\mathcal{C}_n(f)$ on $S_*(\widetilde{\mathcal{C}}_n(\Sigma))$ is twisted with respect to $\Z[\Heis(\Sigma)]$ action, which writes down
$$S_*(\widetilde{\mathcal{C}}_n(f))(zh)=S_*(\widetilde{\mathcal{C}}_n(f)(z)f_\Heis(h), \text{ for $z\in S_*(\widetilde{\mathcal{C}}_n(\Sigma_g))$, $h\in \Heis(\Sigma)$.}$$
We check that the map $z\otimes v\mapsto S_*(\widetilde{\mathcal{C}}_n(f))(z)\otimes v$ defines an isomorphism
$$S_*(\mathcal{C}_n(\Sigma),{}_{\tau\circ f_\Heis}\!W)\rightarrow S_*(\mathcal{C}_n(\Sigma),{}_{\tau}\!W)\ ,$$
which induces the functorial twisted action on the homologies.
\end{proof}
\subsection{MCG representations from the regular action on Heisenberg group}\label{regular}
We gave in \cite{HeisenbergHomology,HeisenbergClosed} examples of representations where the above twisted representation of the Mapping Class Group can be redesigned into a native representation. We briefly present here the linearised translation action on the Heisenberg group.

For $(k_0,x_0)\in \Heis(\Sigma)$, the 
 left regular action $l_{(k_0,x_0)}$ is an affine automorphism of $\Heis(\Sigma)\approx \Z\oplus H_1(\Sigma,\Z)\approx \Z^{2g+m}$. Here $\Sigma=\Sigma_{g,m}$ has genus $g$ and $m$ boundary components.
We  consider the linearisation $\rho_L$ of this affine action on  $L=\Heis(\Sigma)\oplus \Z \supset \Heis(\Sigma)\oplus 1\cong \Heis(\Sigma)$.
The nice feature of this representation is that the twisted representation ${}_{\tau}\!L$  is canonically isomorphic to $L$ \cite{HeisenbergHomology,HeisenbergClosed}. Indeed, for 
 $\tau \in \mathrm{Aut}^+(\Heis(\Sigma))$,  the linear map $\tau\times \id_\Z: L\mapsto {}_{\tau}\!L$ gives an isomorphism of  
$\Z[\Heis(\Sigma)]$-module.
We then obtain.
\begin{theorem}
a)  There is  a representation
$$\mathfrak{M}(\Sigma,\partial^1\Sigma)\rightarrow \mathrm{Aut}( H_*\bigl( \mathcal{C}_n(\Sigma);L \bigr))\ ,$$
which associates to $f\in  \mathfrak{M}(\Sigma,\partial^1\Sigma )$
the composition of the coefficient isomorphism  induced by $f_\Heis$, $$H_*\bigl( \mathcal{C}_n(\Sigma);L \bigr)\cong
H_*\bigl( \mathcal{C}_n(\Sigma) ,{}_{f_\Heis}\!L \bigr)\ ,$$
 with the functorial homology isomorphism
$$\mathcal{C}_n(f)_* \colon H_*\bigl( \mathcal{C}_n(\Sigma) , {}_{f_\Heis}\!L\bigr) \longrightarrow H_*\bigl( \mathcal{C}_n(\Sigma) ;L \bigr)\ .$$
b) There are similar representations on the Borel-Moore homologies
$H_*^{BM}\bigl( \mathcal{C}_n(\Sigma);L \bigr)$ and on the relative versions 
$H_*\bigl( \mathcal{C}_n(\Sigma), \mathcal{C}_n(\Sigma,A);L \bigr)$, $H_*^{BM}\bigl( \mathcal{C}_n(\Sigma), \mathcal{C}_n(\Sigma,A);L \bigr)$ for $A$ an interval in  $\partial^1\Sigma$.
\end{theorem}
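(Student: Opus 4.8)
The plan is to pass to the chain level and show that, once the twist is written out explicitly, the prescribed operator is induced by a chain map that is manifestly multiplicative in $f$.

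First I would recall from the proof of the twisted representation theorem the twisted-equivariant chain automorphism $\Theta_f := S_*(\widetilde{\mathcal{C}}_n(f))$ of $S_*(\widetilde{\mathcal{C}}_n(\Sigma))$ attached to a representative $f$ that is the identity near the base configuration; it is characterised by $\Theta_f(z\,h)=\Theta_f(z)\,f_\Heis(h)$ for $h\in\Heis(\Sigma)$. The functorial isomorphism $\mathcal{C}_n(f)_*\colon H_*(\mathcal{C}_n(\Sigma);{}_{f_\Heis}L)\to H_*(\mathcal{C}_n(\Sigma);L)$ is induced by $z\otimes v\mapsto\Theta_f(z)\otimes v$, and the coefficient isomorphism by the $\Z[\Heis(\Sigma)]$-module map $J_{f_\Heis}=f_\Heis\times\id_\Z\colon L\to{}_{f_\Heis}L$, namely $z\otimes v\mapsto z\otimes J_{f_\Heis}(v)$. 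Composing, the operator $\Phi(f)$ representing $[f]$ is induced by the chain endomorphism
$$ z\otimes v\ \longmapsto\ \Theta_f(z)\otimes M_{f_\Heis}(v),\qquad M_\tau:=\tau\times\id_\Z, $$
of $S_*(\mathcal{C}_n(\Sigma);L)=S_*(\widetilde{\mathcal{C}}_n(\Sigma))\otimes_{\Z[\Heis(\Sigma)]}L$.

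The argument then rests on three multiplicativity facts, in increasing subtlety. Since $\mathcal{C}_n(fg)=\mathcal{C}_n(f)\circ\mathcal{C}_n(g)$, we get $(fg)_{\mathbb{B}_n(\Sigma)}=f_{\mathbb{B}_n(\Sigma)}\circ g_{\mathbb{B}_n(\Sigma)}$, and uniqueness in the commuting square \eqref{eq:projection-equivariance} forces $(fg)_\Heis=f_\Heis\circ g_\Heis$, so $\Psi$ is a homomorphism. With the base configuration fixed in $\partial^1\Sigma$, the preferred lift of a composition is the composition of preferred lifts, giving $\Theta_{fg}=\Theta_f\circ\Theta_g$. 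Finally, $\tau\mapsto M_\tau$ is a homomorphism into $GL(L)$: every $\tau\in\mathrm{Aut}^+(\Heis(\Sigma))$ is the identity on the central subgroup generated by $\phi(\sigma_1)=(1,0)$ and has the form $\tau(k,x)=(k+\psi(x),\bar\tau(x))$ with $\psi$ linear and $\bar\tau\in\mathrm{Aut}(H_1(\Sigma))$ preserving the intersection form; hence $\tau$, and therefore $M_\tau=\tau\times\id_\Z$, is $\Z$-linear on $L\cong\Z^{2g+m+1}$ and $M_{\tau\sigma}=M_\tau M_\sigma$. The same identity $x_0.x=\bar\tau(x_0).\bar\tau(x)$ re-proves that $J_\tau$ is a $\Z[\Heis(\Sigma)]$-module isomorphism, via $\tau A_h=A_{\tau(h)}\tau$ for the linear parts $A_h$ of the regular affine action.

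Granting these, the homomorphism property is a one-line computation on chains:
$$ \Phi(f)\Phi(g)(z\otimes v)=\Theta_f\Theta_g(z)\otimes M_{f_\Heis}M_{g_\Heis}(v)=\Theta_{fg}(z)\otimes M_{(fg)_\Heis}(v)=\Phi(fg)(z\otimes v). $$
Passing to homology yields $\Phi(fg)=\Phi(f)\Phi(g)$; each factor is an isomorphism, so $\Phi$ lands in $\mathrm{Aut}\bigl(H_*(\mathcal{C}_n(\Sigma);L)\bigr)$, and its dependence only on the isotopy class $[f]$ is inherited from the corresponding properties of $\mathcal{C}_n(f)_*$ and $f_\Heis$ proved earlier. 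Part (b) is obtained verbatim: as $f$ fixes $\partial^1\Sigma\supset A$ pointwise, $\mathcal{C}_n(f)$ is a map of pairs preserving $\mathcal{C}_n(\Sigma,A)$ and the skeletal filtration, so the same formula defines $\Phi(f)$ on the Borel–Moore and relative complexes assembled in the previous sections, and the identical computation shows it is a representation.

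The main obstacle is the bookkeeping of the twist rather than the final algebra: $\Theta_f$ is only equivariant up to $f_\Heis$, so a priori $\Phi(f)$ mixes coefficient systems, and the construction succeeds exactly because the untwisting maps $J_\tau$ assemble into a genuine representation $\tau\mapsto M_\tau$ of $\mathrm{Aut}^+(\Heis(\Sigma))$ that intertwines the twisted functorial maps. I would therefore concentrate the care on fact (ii) — that the chosen tethers make the preferred lifts compose — and on the naturality square relating $J_\tau$ to $\Theta_f$, since these are where a sign or a stray twist could slip in.
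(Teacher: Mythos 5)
Your proposal is correct and follows exactly the route the paper intends: the paper derives this theorem from the twisted representation theorem together with the canonical $\Z[\Heis(\Sigma)]$-module isomorphism $\tau\times\id_\Z\colon L\to{}_\tau L$, and your chain-level formula $z\otimes v\mapsto\Theta_f(z)\otimes M_{f_\Heis}(v)$ with the three multiplicativity checks (for $f_\Heis$, for the preferred lifts $\Theta_f$, and for $\tau\mapsto M_\tau$, the last resting on $\bar\tau$ preserving the intersection form because $\tau$ fixes $\phi(\sigma_1)$) is precisely the verification the paper leaves implicit. No gaps.
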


\section{About computation}
In \cite{HeisenbergHomology} we have used a natural intersection pairing for computing the action of the standard left handed  twists  in the case $n=2$. The intersection pairing can be avoided by using a Fox type calculus which we sketch here. Let us consider the relative ribbon graph depicted in Figure \ref{modelG}. Denote by $\alpha$, $\beta$ the oriented cores of the index $1$ handles as depicted in Figure \ref{genus1}. The arc $A$ is the horizontal part of the boundary.
\begin{figure}
\centering
\includegraphics[scale=0.8]{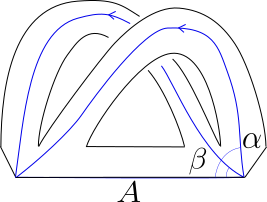}
\caption{Surface $\Sigma_{1,1}$ with tether from the base $2$ points configuration to $\alpha$ and $\beta$.}
\label{genus1}
\end{figure}
Recall from Section \ref{HeisSection} that 
the relative homology with twisted coefficient $H_*(\mathcal{C}_{n}(\Sigma), \mathcal{C}_{n}(\Sigma,A)\setminus T ; W)$
is obtained from the quotient of the complex $S_*(\mathcal{C}_n(\Sigma_{g,m});W)$ by the subcomplex 
$ S_*(\widetilde{\mathcal{C}}_n(\Sigma_{g,m})\setminus \widetilde T)\otimes_{\Z[\Heis(\Sigma_{g,m})] }W$, where $\widetilde T$ is the inverse image of $T$ in the covering space $\widetilde{\mathcal{C}}_n(\Sigma_{g,m})$.

We obtain a basis of the relative Borel-Moore homology with coefficients in the regular representation $\Z[\Heis]$,
$$H_2^{BM}({\mathcal{C}}_2(\Sigma_{1,1}),{\mathcal{C}}_2(\Sigma_{1,1},A);\Z[\Heis])\cong
H_2^{BM}({\mathcal{C}}_2(\Gamma),{\mathcal{C}}_2(\Gamma,A);\Z[\Heis]) \ ,$$
with the open simplices
$w(\alpha)=\mathcal{C}_2(\alpha)$, $w(\beta)=\mathcal{C}_2(\beta)$ and the square $v(\alpha,\beta)=\alpha\times\beta$. Here the lift in the Heisenberg cover is fixed by a path from the base configuration. For $v(\alpha,\beta)$ this tether is depicted in Figure \ref{genus1}.  It connects the first base point to $\alpha$ and the second to $\beta$.
 We connect similarly the simplices,  with a subpath of the previous one for $w(\alpha)$ and a prolongated path for $w(\beta)$.
We denote by $a$ and $b$ closed curves which are parallel to $\alpha\cup A$ and $\beta \cup A$ respectively. We will use the same notation $a$, $b$ for $(0,a)$ and $(0,b)$ in the Heisenberg group $\Heis=\Heis(\Sigma_{g,1})$ and $u=(1,0)\in \Heis$. We are able to compute the matrices for the action of the right-handed Dehn twists $T_a$ and $T_b$ in the given basis.

\begin{proposition} The matrices for the action of the twists $T_a$ and $T_b$ on $H_2^{BM}({\mathcal{C}}_2(\Sigma_{1,1}),{\mathcal{C}}_2(\Sigma_{1,1},A);\Z[\Heis])$ in the above described basis are
$$M_a= \left(\begin{array}{rrr}
 1 &  1 &  -u + 1 \\
0 &  u^{2} a^{ 2 } & 0 \\
0 & a & a
\end{array}\right)$$

$$M_b=\left(\begin{array}{rrr}
 1 & 0 & 0 \\
 -u^{7} a^{ 2 }b^{ -2 } &  1 & ( -u^{4} + u^{3} )a b^{ -1 } \\
 -u^{2} a b^{ -1 } & 0 &  1
\end{array}\right)$$
\end{proposition}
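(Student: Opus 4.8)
The plan is to compute the image of each of the three basis cells under each twist directly in the relative Borel-Moore cellular complex. By Theorem \ref{thm:BMcomplex} and the isomorphism \eqref{eq:ThmIsorr}, I may work in the graph model $(\Gamma,A)$, where $w(\alpha)=\mathcal{C}_2(\alpha)$, $w(\beta)=\mathcal{C}_2(\beta)$ and $v(\alpha,\beta)=\alpha\times\beta$ are honest free generators of the complex in degree $n=2$. For a mapping class $f$ the action is $\mathcal{C}_2(f)_*$, so I must push each generating cell forward along $f$ and re-express the resulting chain, together with its tethered lift, in the given basis; the coefficients then lie in $\Z[\Heis]$ and are words in $u=(1,0)$, $a$ and $b$.

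First I would fix representatives of $T_a$ and $T_b$ supported in annuli around the curves $a$ (parallel to $\alpha\cup A$) and $b$ (parallel to $\beta\cup A$), and record their effect on the oriented arcs. Since $a$ is isotopic to $\alpha\cup A$, the twist $T_a$ fixes $\alpha$ up to isotopy rel the base, which already explains the first column $(1,0,0)$ of $M_a$; symmetrically $T_b$ fixes $\beta$, giving the second column of $M_b$. On the other arc each twist produces a longer curve: $T_a(\beta)$ runs once extra around $a$ and is therefore homotopic to a concatenation of sub-arcs parallel to $\beta$, to $\alpha$, and to pieces of $A$, and similarly for $T_b(\alpha)$.

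The heart of the argument is the decomposition of the two-point configuration space of such a concatenated curve, which is where the Fox-type calculus enters. Writing the image curve as a word $w$ in $\alpha,\beta$, the space $\mathcal{C}_2(w)$ splits according to which letters the two points occupy: both on $\alpha$-letters contributes to $w(\alpha)$, both on $\beta$-letters to $w(\beta)$, and one point on each to $v(\alpha,\beta)$. The $\Z[\Heis]$-coefficient of each basis cell is then a symmetrized second Fox derivative of $w$, in which every occurrence of a generator is weighted by the deck element carrying the preferred lift (fixed by the tethers of Figure \ref{genus1}) to the lift of that occurrence. Geometrically, following the tether along $w$ each winding around a handle contributes a factor $a^{\pm1}$ or $b^{\pm1}$, while each crossing of the two configuration points relative to the tether contributes a central factor $u$; the Fox product rule organizes these weights as the second point is inserted. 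For example, the entry $u^2a^2$ of $M_a$ should appear as the product of the two $a$-weights of the two points lying on the twisted $\beta$ together with a $u^2$ from their mutual crossing, and the binomial entries $-u+1$ and $(-u^4+u^3)ab^{-1}$ as differences of two such decomposition terms whose tethers differ by a single crossing.

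It then remains to assemble the coefficients, keeping track of the incidence signs $[E^k;F^{k-1}]$ and of the chosen orientations of the simplices $\mathcal{C}_2(e)$ and of the square $\alpha\times\beta$. The main obstacle is the precise determination of the central $u$-powers and their relative signs: these record how the tethers braid as the support annulus of the twist sweeps across the configuration, and in particular the asymmetry between the small exponents in $M_a$ and the larger ones ($u^7,u^4,u^3$) in $M_b$ reflects the asymmetry of the chosen tethers, which reach $\alpha$ by a short subpath but $\beta$ only by the prolonged path. Getting these exponents right cannot be done homologically; it requires carefully isotoping each tether through the twist region and counting the crossings produced there. Once the weights are pinned down, collecting coefficients yields the displayed matrices $M_a$ and $M_b$.
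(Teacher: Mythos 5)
Your plan is essentially the paper's own proof: identify the image curves of $\alpha$ and $\beta$ under each twist (the paper writes them as $\beta A\alpha$ and $\beta^{-1}A\alpha$), decompose the two-point configuration space of the concatenated curve, modulo points in $A$, into two simplices and a square according to which sub-arcs the points occupy, and read off each $\Z[\Heis]$-coefficient as an orientation sign times the deck element of a braid running along the tethered lift and back along the basis cell's tether. Your Fox-derivative packaging and your account of the binomial entries $-u+1$ and $(-u^4+u^3)ab^{-1}$ as the two orderings of points on a repeated letter match the paper's "both simplices are deck translations of the same basis element," so the argument is the same up to presentation.
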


\begin{proof}
We have that the twist $T_a$ fixes $\alpha$ and sends $\beta$ to a curve isotopic to $\beta A \alpha$. Recall that we compose paths from the right. 
We get that $w(\alpha)$ is fixed, $T_a.w(\beta)$ is represented by $w(\beta A \alpha)$ and $T_a.v(\alpha,\beta)$ is represented by $v(\alpha,\beta A \alpha)$. Here the notation $w$ and $v$ describes  Borel-Moore relative cycles which are respectively a $2$-simplex and a square supported on the given curves. We then have to analyse carefully those cycles and decompose them in the basis.

Modulo points in $A$, the simplex $w(\beta A \alpha)$ decomposes into two simplices and a square giving in relative Borel-Moore homology a relation
$$w(\beta A \alpha)=w(\alpha)+ \lambda\,v(\alpha,\beta) + \mu\,w(\beta)\ .$$
The coefficients $\lambda$ and $\mu$ are composed of a sign coming from comparing the orientations and an element in the Heisenberg group coming from the deck transformation. The latter one is obtained by evaluating a braid which starts from the base configuration, goes along $w(\beta A \alpha)$ and come back to the base configuration along the basis cycle. We obtain
$$w(\beta A \alpha)=w(\alpha)+ a\,v(\alpha,\beta) + u^2a^2\,w(\beta)\ .$$
In a similar way, the square $v(\alpha,\beta A \alpha)$ decomposes into two simplices and a square. Here both simplices are up to orientation deck translations of the same basis element. We obtain
$$v(\alpha,\beta A \alpha)= (-u+1)\,w(\alpha)+a\,v(\alpha,\beta) \ .$$
The twist $T_b$ sends $\beta$ to $\beta$ and $\alpha$ to a curve isotopic to $\beta^{-1}A\alpha$. The matrix for the action is obtained with the same procedure.
\end{proof}
\begin{remark}
The computation is a bit tricky. We expect a better setup with an appropriate enhancement of Fox calculus. We checked that our matrices satisfy the braid relations. Here, due to the twisted action, we have to act on matrices by the appropriate automorphism of the Heisenberg group when composing mapping classes. Then checking the braid relation $T_aT_bT_a=T_bT_aT_b$ requires the equality
$$M_a\times(T_a)_\Heis.M_b\times (T_aT_b)_\Heis. M_a=M_b\times(T_b)_\Heis.M_a\times (T_bT_a)_\Heis. M_b\ .$$
Here $(T_a)_\Heis(a)=a$, $(T_a)_\Heis(b)=ba=u^{-2}ab$, $(T_b)_\Heis(a)=b^{-1}a=u^2ab^{-1}$, $(T_b)_\Heis(b)=b$.
We verified this equality by hand and with Sage program. We also checked that the composition corresponding to the twist along the boundary, equal to $(T_aT_b)^6$, produces a matrix which commutes with $M_a$ and $M_b$, with appropriate twisting by $(T_a)_\Heis$, $(T_b)_\Heis$.
\end{remark}

\printindex

\bibliographystyle{unsrt}
\bibliography{biblio}

\begin{thebibliography}{10}

\bibitem{HeisenbergHomology}
Christian Blanchet, Martin Palmer, and Awais Shaukat.
\newblock Heisenberg homology on surface configurations.
\newblock ArXiv:\href{http://arxiv.org/abs/2109.00515}{2109.00515}, 2021.

\bibitem{AcampoetcAIF}
Norbert A'Campo, Javier Fern\'{a}ndez De~Bobadilla, Maria Pe~Pereira, and Pablo
  Portilla~Cuadrado.
\newblock T\^{e}te-\`a-t\^{e}te twists, monodromies and representation of
  elements of mapping class group.
\newblock {\em Ann. Inst. Fourier (Grenoble)}, 71(6):2649--2710, 2021.

\bibitem{Graf}
Christian Graf.
\newblock Tête-à-tête graphs and twists.
\newblock PhD Thesis, University of Basel,
  ArXiv:\href{http://arxiv.org/abs/2109.00515}{2109.00515}, 2014.

\bibitem{Scott}
G.~P. Scott.
\newblock Braid groups and the group of homeomorphisms of a surface.
\newblock {\em Proc. Cambridge Philos. Soc.}, 68:605--617, 1970.

\bibitem{Gonzalez}
Juan Gonz\'{a}lez-Meneses.
\newblock New presentations of surface braid groups.
\newblock {\em J. Knot Theory Ramifications}, 10(3):431--451, 2001.

\bibitem{Bellingeri}
Paolo Bellingeri.
\newblock On presentations of surface braid groups.
\newblock {\em J. Algebra}, 274(2):543--563, 2004.

\bibitem{BellingeriGodelle}
Paolo Bellingeri and Eddy Godelle.
\newblock Positive presentations of surface braid groups.
\newblock {\em J. Knot Theory Ramifications}, 16(9):1219--1233, 2007.

\bibitem{Abrams}
Aaron~David Abrams.
\newblock {\em Configuration spaces and braid groups of graphs}.
\newblock ProQuest LLC, Ann Arbor, MI, 2000.
\newblock Thesis (Ph.D.)--University of California, Berkeley.

\bibitem{CrispWiest}
John Crisp and Bert Wiest.
\newblock Embeddings of graph braid and surface groups in right-angled {A}rtin
  groups and braid groups.
\newblock {\em Algebr. Geom. Topol.}, 4:439--472, 2004.

\bibitem{FarleySabalka}
Daniel Farley and Lucas Sabalka.
\newblock Presentations of graph braid groups.
\newblock {\em Forum Math.}, 24(4):827--859, 2012.

\bibitem{Kurlin}
Vitaliy Kurlin.
\newblock Computing braid groups of graphs with applications to robot motion
  planning.
\newblock {\em Homology Homotopy Appl.}, 14(1):159--180, 2012.

\bibitem{Chettih}
Safia Chettih and Daniel L\"{u}tgehetmann.
\newblock The homology of configuration spaces of trees with loops.
\newblock {\em Algebr. Geom. Topol.}, 18(4):2443--2469, 2018.

\bibitem{MaciazekSawicki}
Tomasz Maciazek and Adam Sawicki.
\newblock Non-abelian quantum statistics on graphs.
\newblock {\em Comm. Math. Phys.}, 371(3):921--973, 2019.

\bibitem{Anetc}
Byung~Hee An, Gabriel~C. Drummond-Cole, and Ben Knudsen.
\newblock Edge stabilization in the homology of graph braid groups.
\newblock {\em Geom. Topol.}, 24(1):421--469, 2020.

\bibitem{AnMaciazek}
Byung~Hee An and Tomasz Maciazek.
\newblock Geometric presentations of braid groups for particles on a graph.
\newblock {\em Comm. Math. Phys.}, 384(2):1109--1140, 2021.

\bibitem{Bigelow2001}
Stephen~J. Bigelow.
\newblock Braid groups are linear.
\newblock {\em J. Amer. Math. Soc.}, 14(2):471--486, 2001.

\bibitem{Martel}
Jules Martel.
\newblock A homological model for {$U_q\mathfrak{sl}(2)$} {V}erma-modules and
  their braid representations.
\newblock {\em Geometry \& Topology}, 26(3):1225--1289, 2022.

\bibitem{PrueScrimshaw}
Paul Prue and Travis Scrimshaw.
\newblock Abrams's stable equivalence for graph braid groups.
\newblock {\em Topology Appl.}, 178:136--145, 2014.

\bibitem{HeisenbergClosed}
Awais Shaukat and Christian Blanchet.
\newblock Weakly framed surface configurations, heisenberg homology and mapping
  class group action.
\newblock {\em Arch. Math.}, 2022.
\newblock online, \url{https://doi.org/10.1007/s00013-022-01793-3}.

\end{thebibliography}

\end{document}